\tikzstyle{nodo}=[circle,draw,fill,inner sep=0pt, minimum size=0.5*width("k")]
\tikzstyle{infinito}=[circle,inner sep=0pt,minimum size=0mm]
\theoremstyle{plain}
    \newtheorem{theorem}{Theorem}[section]
    \newtheorem{lemma}[theorem]{Lemma}
    \newtheorem{corollary}[theorem]{Corollary}
\theoremstyle{definition}
    \newtheorem{definition}{Definition}[section]
    \newtheorem{remark}{Remark}[section]
    \newtheorem*{acknowledgement}{Acknowledgement}
\theoremstyle{remark}
\numberwithin{equation}{section}
\newcommand{\cleq}{\lesssim}
\newcommand{\cgeq}{\gtrsim}
\def\norm#1{\left\Vert #1 \right\Vert} 
\def\tbra#1#2{\left\langle #1 , #2 \right\rangle} 
\newcommand{\C}{\mathbb{C}}
\newcommand{\R}{\mathbb{R}}
\DeclareMathOperator{\im}{Im}
\DeclareMathOperator{\re}{Re}
\DeclareMathOperator{\rank}{rank}
\DeclareMathOperator{\slim}{s-lim}
\DeclareMathOperator{\diag}{diag}
\begin{document}

\title[Failure of scattering for NLS on star graph]{Failure of scattering to standing waves for a Schr\"{o}dinger equation with long-range nonlinearity on star graph}
\author[K. Aoki]{Kazuki Aoki}
\address{Department of Mathematics, Graduate School of Science, Osaka University, Toyonaka, Osaka 560-0043, Japan}
\email{k-aoki@cr.math.sci.osaka-u.ac.jp}
\author[T. Inui]{Takahisa Inui}
\address{Department of Mathematics, Graduate School of Science, Osaka University, Toyonaka, Osaka 560-0043, Japan}
\email{inui@math.sci.osaka-u.ac.jp}
\author[H. Mizutani]{Haruya Mizutani}
\address{Department of Mathematics, Graduate School of Science, Osaka University, Toyonaka, Osaka 560-0043, Japan}
\email{haruya@math.sci.osaka-u.ac.jp}
\date{\today}
\keywords{nonlinear Schr\"{o}dinger equation, long-range, star graph}
\subjclass[2010]{35Q55; 81Q35; 35B40, etc.}

\begin{abstract}
We consider the Schr\"{o}dinger equation with power type long-range nonlinearity on star graph. Under a general boundary condition  at the vertex,  including Kirchhoff, Dirichlet, $\delta$, or $\delta'$ boundary condition, we show that the non-trivial global solution does not scatter to standing waves. Our proof is based on the argument by Murphy and Nakanishi \cite{MuNa19}, who treated the long-range nonlinear Schr\"{o}dinger equation with a general potential in the Euclidean space, in order to consider general boundary conditions. 
\end{abstract}

\maketitle

\tableofcontents


\section{Introduction}
We will consider the following Schr\"{o}dinger equation with the power type nonlinearity on a star graph $\mathcal{G}$. 
\begin{align*}
	i \partial_t u +\Delta_{M} u +\lambda |u|^{p}u=0, \quad  t \in \mathbb{R},\ x \in \mathcal{G},
\end{align*}
where $p>0$, $\lambda = \pm 1$, and $\Delta_{M}$ denotes the Laplacian on the star graph $\mathcal{G}$ having the boundary condition which is the determined by a matrix $M$ at the vertex of $\mathcal{G}$. This equation is recently studied by many researchers \cite{ACFN11,ACFN14,AnGo18,Kai19,GoOh19} from the view point of the stability of the standing waves. (See also references therein.) Our aim in the paper is to consider failure of scattering when $0<p<1$, whose nonlinearity is called long-range. 

The global behavior of the solution to the following long-range nonlinear Schr\"{o}dinger equation on the Euclidean space $\mathbb{R}^d$ is well studied. 
\begin{align*}
\begin{cases}
	i \partial_t u + \Delta u +\lambda |u|^{p}u=0, & (t,x) \in \mathbb{R} \times \mathbb{R}^d,
	\\
	u(0,x)=u_0 (x), & x \in \mathbb{R}^d
\end{cases}
\end{align*}
where $0<p\leq 2/d$, $d \in \mathbb{N}$, and $\lambda =\pm 1$. It is studied that no non-trivial solution can exhibit asymptotically free behavior by Strauss \cite{Str74}, Barab \cite{Bar84}, and Cazenave \cite{Caz03}. They showed that $u_{+}$ must be $0$ if $e^{-it\Delta} u(t)$ goes to $u_{+}$ as $t \to \infty$ in some function space $X$. More precisely, Strauss \cite{Str74} showed such result for $X=L^2$ and $u_+ \in L^2 \cap L^1$ when $0 < p \leq \min\{1,2/d\}$. Barab \cite{Bar84} bridged the gap between $1<p\leq 2$ when $d=1$ under the additional assumption $u_0 \in H^1 \cap H^{0,1}$ and $\lambda=-1$(defocusing) by using the pseudoconformal identity, where $H^{0,1}:=\{ f : (1+|x|^2)^{1/2} f \in L^2\}$ denotes a weighted $L^2$ space. Cazenave \cite{Caz03} removed the assumption $u_+ \in  L^1$, which is assumed by both Strauss \cite{Str74} and Barab \cite{Bar84}, when $0<p\leq 2/d$ and $d\geq 2$. He also discussed the case $d=1$, $1<p\leq 2$, and $\lambda=1$(focusing). He proved no asymptotically free result for $u_0 \in H^1 \cap H^{0,1}$, $X= H^1 \cap H^{0,1}$, and $u_+ \in H^1 \cap H^{0,1}$ in that case. 
(See \cite{TsYa84,Oza91,GiOz93,HaNa98} for the related works.) When $d=1$ and $1<p\leq 2$, the no asymptotically free problem without any additional assumption, namely under only the assumption $X=L^2$ and $u_+,u_0\in L^2$, still remains open. That is why we do not pursue the case of $1\leq p\leq 2$.

Recently, Murphy and Nakanishi \cite{MuNa19} consider the nonlinear Schr\"{o}dinger equation with a potential. 
\begin{align*}
\begin{cases}
	i \partial_t u + \Delta u+Vu +\lambda |u|^{p}u=0, & (t,x) \in \mathbb{R} \times \mathbb{R}^d,
	\\
	u(0,x)=u_0 (x), & x \in \mathbb{R}^d
\end{cases}
\end{align*}
where $0<p \leq \min\{1,2/d\}$ and 
\begin{align*}
	V \in L^{\infty}(\mathbb{R}:X) \text{ and } X= L^{\frac{2}{p}-}(\mathbb{R}^d) + 
	\begin{cases}
	L^{\frac{d}{2}}(\mathbb{R}^d) &\text{ when } d \geq 3,
	\\
	L^{1+}(\mathbb{R}^2) &\text{ when } d =2,
	\\
	\mathcal{M}(\mathbb{R}) &\text{ when } d =1,
	\end{cases}
\end{align*}
$a \pm$ denotes $a \pm \delta$ for sufficiently small $\delta>0$, and $\mathcal{M}(\mathbb{R})$ denotes the Banach space of complex Radon measures with finite variation on $\mathbb{R}$. They proved that the solution does not scatter to the solitary waves when $0<p<\min\{1,2/d\}$, that is, $u_{+}$ must be $0$ if $u_0,u_{+} \in L^2$ and  $\| u(t) -  e^{it\Delta} u_{+} -l(t) \|_{L^2} \to 0$ as $t \to \infty$ for some $l \in L^\infty(\mathbb{R}_{+}: L^2\cap L^q)$ where $1\leq q<2$. The function $l$ can describe the solitary wave if the equation has. It is worth remarking that their result contains the Dirac delta potential when $d=1$. (See \cite{Seg15,MaMuSe17} for the rerated work.) Such situation is similar to our equation on a star graph since it may have the Dirac delta interaction at the vertex. Though their equation is on the full line, our equation is on half-lines with each other's interaction. We treat not only Dirac delta interaction but also general interactions including Kirchhoff, Dirac, $\delta$, and $\delta'$ interactions. This is a main difference between their equation and ours.


\section{Main result}

\subsection{Preliminaries}
Before the main result, we prepare some notations. See \cite{KoSc06} and \cite{GrIg19} for details. A finite graph is a 4-tuple  $(V,\mathcal{I},\mathcal{E},\partial)$, where $V$ is the finite set of the vertices, $\mathcal{I}$ is the finite set of internal edges, $\mathcal{E}$ is the finite set of external edges, and $\partial$ is a map from $\mathcal{I} \cup \mathcal{E}$ to the set of vertices and ordered pairs of two vertices and satisfying $\partial(i)=(v_1,v_2)$ (possibly $v_1=v_2$) for $i \in \mathcal{I}$ and $\partial(e) = v$ for $e \in \mathcal{E}$.
An element in $\mathcal{I} \cup \mathcal{E}$ is called an edge. We call $v_1=:\partial^{-}(i)$ and $v_2=:\partial^{+}(i)$ initial and final vertex of the internal edge $i \in \mathcal{I}$, respectively. We endow the graph with the metric structure. We assume that for any internal edge $i \in I$ there exist $a_i >0$ and a map $i\mapsto [0,a_i]$ corresponding $\partial^{-}(i)$ to $0$ and $\partial^{+}(i)$ to $a_i$ and that for any external edge $e \in \mathcal{E}$ there exists a map $e \mapsto [0,\infty)$. We call $a_i$ the length of the internal edge $i \in \mathcal{I}$. The graph endow with such metric structure is called metric graph. For given $n \in \mathbb{N}$, a star-shaped metric graph with $n$-edges or simply star graph is a metric graph $ (\{0\}, \emptyset, \{e_j\}_{j=1}^{n}, \partial:\{e_j\}_{j=1}^{n} \to \{0\})$. See the figure below for typical examples. Figure \ref{fig1} or \ref{fig2} is a star graph with 3-edges or 5-edges, respectively. 

\begin{figure}[htb]
\begin{minipage}{0.4\textwidth}
\begin{tikzpicture}[xscale= 0.5,yscale=0.5]
\node at (0,0) [nodo] (0,0) {};

\draw[->] (0,0)--(-3,2);
\draw[->] (0,0)--(3,2);
\draw[->] (0,0)--(0,-3);
\end{tikzpicture}
\caption{}
\label{fig1}
\end{minipage}
\begin{minipage}{0.4\textwidth}
\begin{tikzpicture}[xscale= 0.5,yscale=0.5]  
\node at (0,0) [nodo] (00) {};

\draw[->] (0,0)--(3,0);
\draw[->] (0,0)--(1,2.4);
\draw[->] (0,0)--(-2.5,1.7);
\draw[->] (0,0)--(-2.5,-1.7);
\draw[->] (0,0)--(1,-2.4);
\end{tikzpicture}
\caption{}
\label{fig2}
\end{minipage}
\end{figure}

Through out the paper, let $\mathcal{G}$ be a star graph. 
A function $f$ on $\mathcal{G}$ is given by a vector $f=(f_1,f_2,...,f_n)^{T}$, where each $f_j$ is a complex-valued function defined on $e_j=[0,\infty)$ and $f^T$ denotes the transposition of $f$. The Lebesgue measure on $\mathcal{G}$ is naturally induced by the Lebesgue measures on the half-lines.  We define the function space $L^2(\mathcal{G})$ as the set of measurable and square-integrable functions on each external edge of $\mathcal{G}$. This means 
\begin{align*}
	L^2(\mathcal{G})=\bigoplus_{j=1}^{n} L^2(e_j)
\end{align*}
whose inner product and norm are defined by
\begin{align*}
	\tbra{f}{g} :=\sum_{j=1}^{n} \tbra{f}{g}_{j} =\sum_{j=1}^{n} \int_{e_j} f_j(x) \overline{g_j(x)} dx, 
	\\
	\norm{f}_{L^2(\mathcal{G})}^2 := \tbra{f}{f} =  \sum_{j=1}^{n} \int_{e_j} |f_j(x)|^2 dx=:\sum_{j=1}^{n}  \norm{f}_{L^2(e_j)}^2, 
\end{align*}
where $f=(f_j)_{j=1,...,n}^{T}$, $g=(g_j)_{j=1,...,n}^{T}$ with $f_j , g_j \in L^2(e_j)$ for each $j=1,...,n$. Then, $L^2(\mathcal{G})$ is the Hilbert space. For $1\leq p \leq \infty$, $L^p(\mathcal{G})$ can be defined similarly, i.e., $f \in L^p(\mathcal{G})$ if $f$ is  the component-wise $L^p$ function. The norms are defined as follows. 
\begin{align*}
	\norm{f}_{L^p(\mathcal{G})}:=
	\begin{cases}
	\left( \sum_{j=1}^{n} \norm{f_j}_{L^p(e_j)}^p\right)^{1/p}, &\text{ if } 1\leq p<\infty,
	\\
	\sup_{1\leq j\leq n} \norm{f_j}_{L^{\infty}(e_j)}, &\text{ if } p=\infty.
	\end{cases} 
\end{align*}
Let $H^m(\mathcal{G})$ for $m=1,2$ be 
\begin{align*}
	H^m(\mathcal{G}):= \bigoplus_{j=1}^{n} H^m(e_j),
\end{align*}
whose norms are defined by 
\begin{align*}
	\norm{f}_{H^m(\mathcal{G})}:=
	\left( \sum_{j=1}^{n} \norm{f_j}_{H^m(e_j)}^2\right)^{1/2}.
\end{align*}
We remark that we do not assume any conditions at the joint point $0$. 
If $f(x)=\sum_{j=1}^{n}f_j(x_j)$, where $f_j$ is defined on $e_j$, then the integral of $f$ on $\mathcal{G}$ is defined by 
\begin{align*}
	\int_{\mathcal{G}} f(x) dx = \sum_{j=1}^{n} \int_{e_j} f_j(x_j) dx_j.
\end{align*}

We introduce the Laplacian on the star graph. Let $A,B$ be complex-valued $n \times n$ matrices satisfying the following two conditions.

\begin{itemize}
\item[(A1)] $n \times(2n)$ matrix $(A, B)$ has maximal rank, i.e. $\rank (A,B)=n$.  
\item[(A2)] $AB^*$ is self-adjoint, i.e., $AB^* = (AB^*)^*$, where $X^*:= \overline{X}^T$ denotes the adjoint of the matrix $X$. 
\end{itemize}
Let $M=(A,B)$. We define the Laplacian $\Delta_{M}$ on the star graph as follows. 
\begin{align*}
	&\mathscr{D}(\Delta_{M}) :=\{f \in D(\mathcal{G}): A f(0) + Bf'(0+)=0\},
	\\
	&\Delta_{M} f=(f''_1,f''_2, ... , f''_n)^{T},
\end{align*}
where $D(\mathcal{G})=  \bigoplus_{j=1}^{n} D_j(e_j)$ and $D_j(e_j)$ is the set of functions $f_j \in L^2(e_j)$ satisfying that $f_j$ and $f'_j$ are absolutely continuous and $f_j'' \in L^2(e_j)$ for $j=1,\cdots,n.$
Under the assumption (A1) and (A2), the Laplacian $\Delta_{M}$ is self-adjoint on $L^2(\mathcal{G})$ (see \cite{KoSc06}). 
In fact, the assumption (A1) and (A2) are equivalent to that the differential operator $\frac{d^2}{dx^2}f=(f''_1,f''_2, ... , f''_n)^{T}$ on a space of test functions has self-adjoint extensions in $L^2(\mathcal{G})$. Under the assumption (A1) and (A2), $e^{it\Delta_{M}}$ can be defined as the unitary operator on $L^2(\mathcal{G})$ by the Stone theorem. Below, we always assume (A1) and (A2) on $M$.

The typical examples of $\Delta_{M}$ are the following. 
\begin{enumerate}
\renewcommand{\theenumi}{\alph{enumi}}
\item Kirchhoff boundary condition: Let $M=(A,B)$ be
\begin{align*}
	A=
	\begin{pmatrix}
	1 & -1 & 0 & \cdots & 0 & 0
	\\
	0 & 1 & -1 & \cdots & 0 & 0
	\\
	\vdots & \vdots & \vdots &   & \vdots & \vdots
	\\
	0 & 0 & 0 & \cdots & 1 & -1
	\\
	0 & 0 & 0 & \cdots & 0 & 0
	\end{pmatrix},
	\quad
	B=
	\begin{pmatrix}
	0 & 0 & 0 & \cdots & 0 & 0
	\\
	0 &0 & 0 & \cdots & 0 & 0
	\\
	\vdots & \vdots & \vdots &   & \vdots & \vdots
	\\
	0 & 0 & 0 & \cdots & 0 & 0
	\\
	1 & 1 & 1 & \cdots & 1 & 1
	\end{pmatrix}.
\end{align*} 
For such $M$, $A f(0) + Bf'(0+)=0$ implies that $f_j(0)=f_k(0)$ for any $j,k \in \{1,2,\cdots,n\}$ and $\sum_{j=1}^{n} f_j'(0+)=0$.  This is called the Kirchhoff boundary condition. We denote the the Laplacian determined by the Kirchhoff boundary condition by $\Delta_{K}$.  In the sense that there is no external force, i.e., no external interaction at the vertex, the Laplacian $\Delta_{K}$ is regarded as free Laplacian on the star graph. 

\item Dirac delta ($\delta$) boundary condition: Let $\alpha \neq 0$ and  $M=(A,B)$ be
\begin{align*}
	A=
	\begin{pmatrix}
	1 & -1 & 0 & \cdots & 0 & 0
	\\
	0 & 1 & -1 & \cdots & 0 & 0
	\\
	\vdots & \vdots & \vdots &   & \vdots & \vdots
	\\
	0 & 0 & 0 & \cdots & 1 & -1
	\\
	-\alpha & 0 & 0 & \cdots & 0 & 0
	\end{pmatrix},
	\quad
	B=
	\begin{pmatrix}
	0 & 0 & 0 & \cdots & 0 & 0
	\\
	0 &0 & 0 & \cdots & 0 & 0
	\\
	\vdots & \vdots & \vdots &   & \vdots & \vdots
	\\
	0 & 0 & 0 & \cdots & 0 & 0
	\\
	1 & 1 & 1 & \cdots & 1 & 1
	\end{pmatrix}.
\end{align*} 
For this $M$, $A f(0) + Bf'(0+)=0$ implies that $f_j(0)=f_k(0)$ for any $j,k \in \{1,2,\cdots,n\}$ and $\sum_{j=1}^{n} f_j'(0+)=\alpha f_k (0)$. This is called the Dirac delta boundary condition. 

\item Dirichlet (zero) boundary condition: Let $M=(A,B)$ be
\begin{align*}
	A=I,
	\quad
	B=0,
\end{align*}
where $I$ is the $n \times n$ identity matrix and $0$ is the zero matrix. 
For this $M$, $A f(0) + Bf'(0+)=0$ implies that $f_j(0)=0$ for any $j \in \{1,2,\cdots,n\}$.  This is so-called the Dirichlet zero (or, simply, Dirichlet) boundary condition. The Dirichlet boundary condition means that star graph is not connected at the vertex. We can regard the star graph as $n$ half lines without each other's interaction. We denote the the Laplacian determined by the Dirichlet boundary condition by $\Delta_{D}$.

\item $\delta'$ boundary condition: Let $\alpha \in \mathbb{R}$ and  $M=(A,B)$ be
\begin{align*}
	A=
	\begin{pmatrix}
	0 & 0 & 0 & \cdots & 0 & 0
	\\
	0 &0 & 0 & \cdots & 0 & 0
	\\
	\vdots & \vdots & \vdots &   & \vdots & \vdots
	\\
	0 & 0 & 0 & \cdots & 0 & 0
	\\
	1 & 1 & 1 & \cdots & 1 & 1
	\end{pmatrix},
	\quad
	B=
	\begin{pmatrix}
	1 & -1 & 0 & \cdots & 0 & 0
	\\
	0 & 1 & -1 & \cdots & 0 & 0
	\\
	\vdots & \vdots & \vdots &   & \vdots & \vdots
	\\
	0 & 0 & 0 & \cdots & 1 & -1
	\\
	-\alpha & 0 & 0 & \cdots & 0 & 0
	\end{pmatrix}.
\end{align*} 
For this $M$, $A f(0) + Bf'(0+)=0$ implies that $f'_j(0)=f'_k(0)$ for any $j,k \in \{1,2,\cdots,n\}$ and $\sum_{j=1}^{n} f_j(0+)=\alpha f'_k (0)$. This is called $\delta'$ boundary condition. 
\end{enumerate}

\begin{remark}
$M=(A,B)$ is not determined uniquely from $-\Delta_{M}$. Indeed, $A=-\alpha I$ and $B=(1)_{1\leq i,j \leq n}$, which is different from (b), also imply the Laplacian with Dirac delta boundary condition. See \cite{KoSc06} for details. 
\end{remark}

We consider the following nonlinear Schr\"{o}dinger equation on the star graph $\mathcal{G}$.
\begin{align}
\tag{NLS}
\label{NLS}
\begin{cases}
i \partial_t u +\Delta_{M} u +\lambda |u|^{p}u=0, & t \in \mathbb{R},\ x \in \mathcal{G},
\\
u(0,x) =u_0(x), &x \in \mathcal{G},
\end{cases}
\end{align}
where $0<p<1$.

\subsection{Main result}

The global existence of $L^2$-solution to \eqref{NLS} is obtained by \cite{GrIg19}. 
We have the following main result.

\begin{theorem}
\label{thm2.1}
Let $0<p<1$, $u_0 \in L^2(\mathcal{G})$. If $u$ is a global solution of \eqref{NLS} satisfying
\begin{align*}
	\norm{u(t) - (e^{it\Delta_{K}} u_{+} +l(t))}_{L^2(\mathcal{G})} \to 0 \quad (t \to \infty)
\end{align*}
for some $u_{+} \in L^2(\mathcal{G})$, 
where $l \in L^\infty((0,\infty): L^2(\mathcal{G}) \cap L^q(\mathcal{G}))$ for some $1\leq q<2$, then $u_{+} \equiv 0$. The similar result holds in the negative time direction. 
\end{theorem}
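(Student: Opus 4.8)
\textbf{The plan} is to argue by contradiction along the lines of Murphy--Nakanishi \cite{MuNa19}, so suppose $u_{+}\not\equiv 0$.

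\emph{Step 1 (reduction to $\Delta_{M}$-asymptotics).} Since the resolvent difference $(\Delta_{M}-z)^{-1}-(\Delta_{K}-z)^{-1}$ is finite rank, the wave operators $W_{\pm}:=\slim_{t\to\pm\infty}e^{-it\Delta_{M}}e^{it\Delta_{K}}$ exist; as $\Delta_{K}$ has purely absolutely continuous spectrum, $W_{+}$ is an isometry of $L^{2}(\mathcal{G})$ onto $\mathcal{H}_{\mathrm{ac}}(\Delta_{M})$. Applying $e^{-it\Delta_{M}}$ to the hypothesis and letting $t\to\infty$ recasts it as $\norm{u(t)-e^{it\Delta_{M}}\tilde u_{+}-l(t)}_{L^{2}(\mathcal{G})}\to 0$, where $\tilde u_{+}:=W_{+}u_{+}\in\mathcal{H}_{\mathrm{ac}}(\Delta_{M})$ satisfies $\norm{\tilde u_{+}}_{L^{2}}=\norm{u_{+}}_{L^{2}}$; hence it suffices to derive a contradiction from $\tilde u_{+}\not\equiv 0$. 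Write $u(t)=e^{it\Delta_{M}}\tilde u_{+}+l(t)+r(t)$ with $\norm{r(t)}_{L^{2}}\to 0$.

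\emph{Step 2 (scattering $\Rightarrow$ convergence of a key integral).} Fix a ``nice'' $\phi\in\mathcal{H}_{\mathrm{ac}}(\Delta_{M})$, i.e. one whose generalized Fourier transform is smooth and compactly supported away from the zero energy, so that $e^{is\Delta_{M}}\phi$ enjoys the full dispersive decay $\norm{e^{is\Delta_{M}}\phi}_{L^{\rho}(\mathcal{G})}\cleq s^{-(1/2-1/\rho)}$ for $\rho\ge 2$, $s\ge 1$. Pairing the Duhamel formula for $u$ with $e^{it\Delta_{M}}\phi$ and using unitarity of $e^{it\Delta_{M}}$ gives
\[
\tbra{u(t)}{e^{it\Delta_{M}}\phi}=\tbra{u_{0}}{\phi}+i\lambda\int_{0}^{t}\tbra{|u(s)|^{p}u(s)}{e^{is\Delta_{M}}\phi}\,ds ,
\]
legitimate at the $L^{2}$-level via the Strichartz theory of \cite{GrIg19}. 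On the left, $\tbra{u(t)}{e^{it\Delta_{M}}\phi}=\tbra{\tilde u_{+}}{\phi}+\tbra{l(t)}{e^{it\Delta_{M}}\phi}+\tbra{r(t)}{e^{it\Delta_{M}}\phi}$; the last term tends to $0$ since $\norm{r(t)}_{L^{2}}\to 0$, and the middle one is $\le\norm{l(t)}_{L^{q}}\norm{e^{it\Delta_{M}}\phi}_{L^{q'}}\to 0$ because $\sup_{t}\norm{l(t)}_{L^{q}}<\infty$ and $q'>2$. Thus $\tbra{u(t)}{e^{it\Delta_{M}}\phi}\to\tbra{\tilde u_{+}}{\phi}$, so $\int_{0}^{\infty}\tbra{|u(s)|^{p}u(s)}{e^{is\Delta_{M}}\phi}\,ds$ converges in $\C$.

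\emph{Step 3 (long-range $\Rightarrow$ divergence of the same integral).} Substitute $u=U+l+r$ with $U(s):=e^{is\Delta_{M}}\tilde u_{+}$. For the leading term, the known large-time asymptotics of the graph propagator, $e^{is\Delta_{M}}g(x)\approx s^{-1/2}e^{i|x|^{2}/4s}\widehat g(x/2s)$, make the Fresnel phases in $|U|^{p}U$ and in $\overline{e^{is\Delta_{M}}\phi}$ cancel, and a change of variables yields $\tbra{|U(s)|^{p}U(s)}{e^{is\Delta_{M}}\phi}=c_{\phi}\,s^{-p/2}+o(s^{-p/2})$, where $c_{\phi}$ is a fixed nonzero multiple of $\int |\widehat{\tilde u}_{+}|^{p}\widehat{\tilde u}_{+}\,\overline{\widehat\phi}$. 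Since $|\widehat{\tilde u}_{+}|^{p}\widehat{\tilde u}_{+}\not\equiv 0$, I may choose $\phi$ (and multiply it by a unimodular constant) so that $\re c_{\phi}>0$. Every remaining contribution to $\tbra{|u|^{p}u}{e^{is\Delta_{M}}\phi}$ carries at least one factor $l$ or $r$; estimating by Hölder with the powers of $l$ placed in $L^{q}\cap L^{2}$ (here $0<p<1$ guarantees that $|l|^{p+1}$ still lies in some $L^{\sigma}$ with $\sigma>1$) and $e^{is\Delta_{M}}\phi$ in the conjugate exponent, the strict inequality $q<2$ makes $e^{is\Delta_{M}}\phi$ decay faster than the main rate, so the $l$-terms are $O(s^{-p/2-\eps})$ for some $\eps>0$; the $r$-terms are $\cleq(\norm{r(s)}_{L^{2}}+\norm{r(s)}_{L^{2}}^{p+1})\,s^{-p/2}$, hence $o(s^{-p/2})$ after integration since $\norm{r(s)}_{L^{2}}\to 0$. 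Collecting these,
\[
\re\int_{1}^{T}\tbra{|u(s)|^{p}u(s)}{e^{is\Delta_{M}}\phi}\,ds\ \ge\ \tfrac{1}{2}\re(c_{\phi})\int_{1}^{T}s^{-p/2}\,ds-o(T^{1-p/2})\ \longrightarrow\ +\infty
\]
as $T\to\infty$, since $0<p<1<2$ gives $\int_{1}^{\infty}s^{-p/2}\,ds=\infty$. This contradicts Step 2, so $\tilde u_{+}\equiv 0$ and hence $u_{+}\equiv 0$; the negative-time statement follows identically using $W_{-}$ and $t\to-\infty$.

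\emph{Expected main obstacle.} The heart of the matter is Step 3: the sharp dispersive bound $\norm{e^{it\Delta_{M}}P_{\mathrm{ac}}}_{L^{1}\to L^{\infty}}\cleq|t|^{-1/2}$ together with the leading stationary-phase profile of $e^{it\Delta_{M}}$ on $\mathcal{G}$ for an \emph{arbitrary} admissible $M$ (in particular the Kirchhoff case and other zero-energy resonant situations), and the bookkeeping that every error term built from $l$ genuinely beats the borderline rate $s^{-p/2}$ — which is precisely where the assumption $q<2$ is used and cannot be relaxed. A secondary, but classical, point is the existence and completeness of the wave operators $W_{\pm}$ for star-graph Laplacians invoked in Step 1.
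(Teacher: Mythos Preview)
Your overall architecture is the Murphy--Nakanishi scheme, as is the paper's, but your Step~1 sends the problem in the opposite direction from the paper and this creates a real gap in Step~3. You intertwine $e^{it\Delta_K}$ with $e^{it\Delta_M}$ so that the Duhamel pairing in Step~2 has no boundary term; the price is that Step~3 then rests entirely on a Dollard-type expansion $e^{is\Delta_M}g(x)\approx s^{-1/2}e^{i|x|^2/4s}\widehat g(x/2s)$ and on sharp $L^1\to L^\infty$ decay for $e^{is\Delta_M}P_{\mathrm{ac}}$, for an \emph{arbitrary} self-adjoint vertex condition $M$. You flag this yourself as the ``expected main obstacle'', and it is indeed the gap: no such factorization is established here, the paper explicitly remarks that it is \emph{not clear} even for the Kirchhoff propagator $e^{it\Delta_K}$, and for general $M$ (possibly zero-energy resonant) the pointwise dispersive bound you invoke is not free. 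Without these two inputs your computation of $c_\phi$ and the phase cancellation in $|U|^pU\,\overline{e^{is\Delta_M}\phi}$ are formal.

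The paper avoids this obstacle by intertwining the \emph{other} way: it replaces $e^{it\Delta_K}u_+$ (and more generally $e^{it\Delta_M}P_{\mathrm{ac}}(M)u_+$) by $e^{it\Delta_D}v_+$ via the same trace-class resolvent argument, and then runs the weak-formulation argument with the test function $w=e^{it\Delta_D}\varphi$. The Dirichlet propagator has an explicit factorization $e^{it\Delta_D}=\mathcal{M}\mathcal{D}\mathcal{F}\mathcal{M}$ with $\mathcal{F}$ the Fourier--Sine transform, so the analogue of your Step~3 becomes a genuine computation (Lemmas~\ref{lem3.2} and \ref{lem3.5}). The cost is that, since the equation carries $\Delta_M$ but the test function evolves by $\Delta_D$, integrating by parts on each edge produces a boundary contribution; the Dirichlet condition $w_j(t,0)=0$ kills the dangerous term involving $\partial_x u_j(t,0+)$ and leaves only $u_j(t,0+)\,\overline{\partial_x w_j(t,0+)}$, which is shown to be $O(\tau^{1/2})=o(\tau^{1-p/2})$ by the explicit kernel bound $\norm{\partial_x w}_{L^\infty}\lesssim t^{-1/2}$ together with the local Strichartz bound $\norm{u}_{L^4_tL^\infty_x}\lesssim\norm{u_0}_{L^2}$ on unit intervals. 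In short: your route trades a boundary term for an unproved factorization of $e^{it\Delta_M}$; the paper trades the factorization problem for a boundary term it can estimate.
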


Since $-\Delta_{K}$ is the free Laplacian on the star graph, this means the failure of scattering to standing waves for the long-range nonlinear Schr\"{o}dinger equation on the star graph. This also means no assympotically free result if $l=0$. We note that we only treat the case of $0<p<1$ and assume only $u_0 \in L^2(\mathcal{G})$ and $v_{+} \in L^2(\mathcal{G})$. 

We also have the following. Let $P_{ac}(M)$ denote the projection onto the absolutely continuous spectral subspace of $L^2(\mathcal{G})$ associated to $-\Delta_{M}$. 

\begin{corollary}
\label{cor2}
Let $0<p<1$. If $u$ is a global solution of \eqref{NLS} satisfying
\begin{align*}
	\norm{u(t) - (e^{it\Delta_{M}} P_{ac}(M) u_{+} +l(t))}_{L^2(\mathcal{G})} \to 0 \quad (t \to \infty)
\end{align*}
where $l \in L^\infty(0,\infty: L^2(\mathcal{G}) \cap L^q(\mathcal{G}))$ for some $1\leq q<2$, then $v_{+} \equiv 0$.
\end{corollary}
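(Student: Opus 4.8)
The plan is to reduce Corollary \ref{cor2} to Theorem \ref{thm2.1} by replacing the $\Delta_M$-evolution of $P_{ac}(M)u_+$ by a $\Delta_K$-evolution, at the cost of an $L^2$-error that vanishes as $t\to\infty$. The bridge is the wave operator comparing $-\Delta_M$ and $-\Delta_K$. First I would record that the wave operators
\begin{align*}
W_\pm := \slim_{t\to\pm\infty} e^{-it\Delta_M} e^{it\Delta_K}
\end{align*}
exist and are \emph{complete}, i.e.\ $\Ran W_\pm = \Ran P_{ac}(M)$ (no projection is needed on the $\Delta_K$ side, since $-\Delta_K$ has purely absolutely continuous spectrum). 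This is a short-range situation at the vertex: by Krein's resolvent formula for self-adjoint extensions of the minimal Laplacian, the difference $(-\Delta_M+i)^{-1}-(-\Delta_K+i)^{-1}$ has rank at most $n$, hence is trace class, so the Birman--Kuroda theorem yields existence and completeness; alternatively one may invoke directly the known scattering theory for Laplacians on metric graphs (cf.\ \cite{KoSc06}). Completeness of $W_+$ then gives that the reverse wave operator exists and equals the adjoint,
\begin{align*}
W_+^* = \slim_{t\to\infty} e^{-it\Delta_K} e^{it\Delta_M} P_{ac}(M).
\end{align*}

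Next I would set $\widetilde u_+ := W_+^* u_+ \in L^2(\mathcal{G})$. Since $e^{-it\Delta_K}$ is unitary on $L^2(\mathcal{G})$,
\begin{align*}
\norm{ e^{it\Delta_M} P_{ac}(M) u_+ - e^{it\Delta_K}\widetilde u_+ }_{L^2(\mathcal{G})}
= \norm{ e^{-it\Delta_K} e^{it\Delta_M} P_{ac}(M) u_+ - \widetilde u_+ }_{L^2(\mathcal{G})}
\to 0 \quad (t\to\infty).
\end{align*}
Combining this with the hypothesis of the corollary and the triangle inequality gives
\begin{align*}
\norm{ u(t) - (e^{it\Delta_K}\widetilde u_+ + l(t)) }_{L^2(\mathcal{G})} \to 0 \quad (t\to\infty),
\end{align*}
with the \emph{same} $l \in L^\infty((0,\infty):L^2(\mathcal{G})\cap L^q(\mathcal{G}))$. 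Theorem \ref{thm2.1} then forces $\widetilde u_+ \equiv 0$. Feeding this back into the first displayed limit above shows $\norm{e^{it\Delta_M} P_{ac}(M)u_+}_{L^2(\mathcal{G})} \to 0$; but $e^{it\Delta_M}$ is unitary, so this norm is independent of $t$ and hence identically zero, i.e.\ $P_{ac}(M)u_+ \equiv 0$, which is the assertion (the quantity denoted $v_+$ in the statement). The negative-time statement follows verbatim, replacing $W_+$ by $W_-$ and invoking the negative-time part of Theorem \ref{thm2.1}.

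The only non-formal ingredient, and thus the main point to get right, is the existence and asymptotic completeness of $W_\pm$ together with the identity $W_+^* = \slim_{t\to\infty} e^{-it\Delta_K}e^{it\Delta_M}P_{ac}(M)$; everything else is a triangle inequality plus unitarity. Once one checks, from Krein's formula under (A1)--(A2), that the resolvent difference is finite rank, this is a textbook application of trace-class scattering theory. Alternatively, on a star graph one can make the whole argument explicit through the vertex scattering matrix and the generalized-eigenfunction expansion of $-\Delta_M$, which would render the corollary self-contained. One minor subtlety worth recording: $-\Delta_M$ may carry eigenvalues (negative or embedded in $[0,\infty)$) and a priori a singular continuous part, but $P_{ac}(M)$ is precisely what absorbs these, so no additional argument is required.
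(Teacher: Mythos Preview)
Your argument is correct and rests on the same key ingredient as the paper: the resolvent difference between any two self-adjoint vertex Laplacians on $\mathcal{G}$ is finite rank (Krein's formula, which is exactly the content of \cite[Lemma~4.2]{KoSc06} quoted in Section~\ref{sec3.2}), so trace-class scattering theory (Kato--Kuroda--Birman) applies. The only difference is organizational. The paper compares $-\Delta_M$ directly with the \emph{Dirichlet} Laplacian $-\Delta_D$ (this is Lemma~\ref{lem2.3}) and then invokes Lemma~\ref{lem2.2}, bypassing Theorem~\ref{thm2.1} entirely. You instead compare $-\Delta_M$ with the \emph{Kirchhoff} Laplacian $-\Delta_K$ and then invoke Theorem~\ref{thm2.1} as a black box; since the proof of Theorem~\ref{thm2.1} itself passes through $-\Delta_D$ via Lemma~\ref{lem2.3} (with $M=K$) and Lemma~\ref{lem2.2}, your route is the composition $M\to K\to D$ rather than the paper's $M\to D$. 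Neither route buys anything the other does not; the paper's is one step shorter, while yours has the expository advantage of deducing the corollary formally from the headline theorem. Your remark that completeness is needed to write $W_+^*=\slim_{t\to\infty}e^{-it\Delta_K}e^{it\Delta_M}P_{ac}(M)$ is a good point; the paper's Lemma~\ref{lem3.1.1} already packages precisely this consequence.
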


This corollary can be proved by Lemma \ref{lem2.3} below.  

\begin{remark}
$-\Delta_{M}$ may have negative eigenvalues. For instance, the Laplacian with Dirac delta boundary condition (b) has a negative eigenvalue if $\alpha<0$. The sufficient and necessary condition for $-\Delta_{M}$ to have negative eigenvalues is known by \cite[Lemma 3.1]{KoSc06}. It is worth emphasizing that $-\Delta_{K}$ has no negative eigenvalues. 
\end{remark}

\begin{remark}
Yoshinaga investigated the Schr\"{o}dinger equation with the short-range nonlinearity on the star graph in \cite{Yos18}. He showed that the solution of \eqref{NLS} with $\lambda=-1$, $p>2$, and $M=K$ behaves like the linear solution at infinite time. More precisely, if  $u_0 \in \Sigma_c$, where $\Sigma_c := \{u \in H^1(\mathcal{G}): \norm{xu}_{L^2(\mathcal{G})}< \infty \text{ and } u_1(0)=\cdots=u_n(0)\}$,  and $u$ denotes the solution to \eqref{NLS} with $\lambda=-1$, $p>2$, and $M=K$, then there exist $u_{\pm} \in L^2(\mathcal{G})$ such that
\begin{align*}
	\lim_{t \to \pm \infty} \norm{u(t)- e^{it\Delta_{K}}u_{\pm}}_{L^2(\mathcal{G})}=0.
\end{align*}
\end{remark}

\subsection{Idea of the proof}
\label{sec2.3}

Our proof is based on the argument by Murphy and Nakanishi \cite{MuNa19}. 
Multiplying the free solution $w=e^{it\Delta}\varphi$ to the nonlinear equation and integrating it on whole space, they obtained the weak formula
\begin{align*}
	i \frac{d}{dt} \tbra{u}{w}+\tbra{Vu}{w} +\lambda \tbra{|u|^p u}{w}=0.
\end{align*}
To estimate the third term, they used the so-called Dollard decomposition $e^{it\Delta}=\mathscr{MDFM}$, where $\mathscr{M}$ is a multiplier operator, $\mathscr{D}$ is a dilation operator, and $\mathscr{F}$ is the usual Fourier transform on the Euclidean space (see \cite{MuNa19} for the detalis). 
The decomposition $e^{it\Delta}=\mathscr{MDFM}$ is also called the factorization formula. 
They also applied the Strichartz estimates and the local well-posedness argument to estimate the second term.

For our equation \eqref{NLS}, $e^{it\Delta_{K}}$ is the free propagator in the sense that the Kirchhoff boundary condition denotes no external force. However, it is not clear that $e^{it\Delta_{K}}$ has a factorization formula. 
That is why we use $w=e^{it\Delta_{D}}\varphi$, whose Dollard decomposition $e^{it\Delta_{D}}=\mathcal{MDFM}$ is used in \cite{EsKaHa19} to analyze a nonlinear Schr\"{o}dinger equation on the half-line with an inhomogeneous Dirichlet boundary, as a test function. 
Multiplying $w=e^{it\Delta_{D}}\varphi$ to \eqref{NLS} and integrating it on $e_j$, we get the following weak formula:
\begin{align*}
	i \frac{d}{dt} \tbra{u}{w}_{j} + u_j(t,0+) \overline{\partial_x w_j (t,0+)} +\lambda \tbra{|u|^p u}{w}_{j}=0,
\end{align*}
where the Dirichlet zero boundary condition also plays a crucial role. Otherwise, a term involving $\partial_x u(t,0+)$ may appear in the weak formula which seems to be difficult to estimate in the $L^2$-framework. We will estimate the third term by the Dollard decomposition of $e^{it\Delta_{D}}$. We will apply the Strichartz estimates of $e^{it\Delta_{M}}$ and the local well-posedness argument of \eqref{NLS}, which are obtained by Grecu and Ignat \cite{GrIg19}, to estimate the second term based on the argument in \cite{MuNa19}.

\begin{lemma}
\label{lem2.2}
Let $0<p<1$. If $u$ is a global solution of \eqref{NLS} satisfying
\begin{align*}
	\norm{u(t) - (e^{it\Delta_{D}} v_+ + l(t))}_{L^2(e_j)} \to 0 \quad (t \to \infty)
\end{align*}
for some $j \in \{1, ... , n\}$ and $v_+ \in L^2(\mathcal{G})$, then $v_+ \equiv 0$ on $e_j$.
\end{lemma}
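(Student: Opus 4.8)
The plan is to adapt the Strauss--Murphy--Nakanishi scheme directly on a single external edge $e_j=[0,\infty)$, using $w=e^{it\Delta_D}\varphi$ as the test function. First I would fix $\varphi$ in a suitable dense class (say Schwartz functions on $(0,\infty)$ vanishing near $0$, or more precisely the class on which the Dollard decomposition $e^{it\Delta_D}=\mathcal{MDFM}$ of \cite{EsKaHa19} is convenient), pair \eqref{NLS} with $w$, and integrate by parts on $e_j$ to obtain the weak formula stated in Section~\ref{sec2.3}:
\begin{align*}
	i \frac{d}{dt} \tbra{u}{w}_{j} + u_j(t,0+)\, \overline{\partial_x w_j (t,0+)} + \lambda \tbra{|u|^p u}{w}_{j}=0.
\end{align*}
Because $w$ satisfies the Dirichlet condition, $w_j(t,0+)=0$, so the only boundary term is the one displayed; crucially there is \emph{no} $\partial_x u_j(t,0+)$ term, which is what makes the $L^2$ framework viable. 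Integrating in $t$ from a large time $T_0$ to $T$ and using the scattering hypothesis $\|u(t)-(e^{it\Delta_D}v_+ + l(t))\|_{L^2(e_j)}\to 0$ to identify $\lim_{t\to\infty}\tbra{u(t)}{w(t)}_j = \tbra{v_+}{\varphi}_{L^2(e_j)}$ (the $l$ term contributing nothing because $w(t)\wto 0$ weakly in $L^2$, being a free evolution of a fixed $L^2$ datum, while $l(t)$ is bounded), I reduce the whole statement to showing that
\begin{align*}
	\int_{T_0}^\infty \big| \tbra{|u|^p u}{w}_j \big|\, dt < \infty
	\quad\text{and indeed}\quad
	\tbra{v_+}{\varphi}_{L^2(e_j)} = \text{(a convergent boundary + nonlinear integral)}
\end{align*}
with the nonlinear integral enjoying a gain as $T_0\to\infty$, forcing $\tbra{v_+}{\varphi}_{L^2(e_j)}=0$ for all $\varphi$, hence $v_+\equiv 0$ on $e_j$.

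The two integrals must be controlled. For the boundary term $\int u_j(t,0+)\overline{\partial_x w_j(t,0+)}\,dt$, I would use the Dollard/factorization formula for $e^{it\Delta_D}$ to get the pointwise-in-$x$ asymptotics of $\partial_x w_j(t,0+)$, which decays like $t^{-1/2}$ (with an explicit oscillatory profile), combined with a trace/Strichartz bound for $u_j(t,0+)$ coming from the local well-posedness theory of \cite{GrIg19}; one wants $\|u_j(\cdot,0+)\|$ in some $L^r_t$ space on $[T_0,\infty)$ that pairs against $t^{-1/2}$ and is small for large $T_0$ (this uses $0<p<1$, i.e. that the nonlinearity is long-range and the a priori $L^2$ bound gives control with room to spare). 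For the nonlinear term $\tbra{|u|^pu}{w}_j$, the standard trick is to replace $u$ by the free profile: write $u \approx e^{it\Delta_D}v_+ + l + (\text{error})$, then factor $e^{it\Delta_D}=\mathcal{MDFM}$ and exploit that $|\mathcal{M}|=1$ so $||u|^pu| \approx$ a dilated/rescaled object whose $L^1_x$ norm against $w$ decays like $t^{-p/2}$ (the classical Strauss computation), and since $p>0$ this is integrable only borderline — more precisely one gets $\int_{T_0}^T \ldots \lesssim T_0^{-p/2}$-type smallness or at worst a bound independent of $T$ that $\to 0$ as $T_0\to\infty$, using $0<p<1$. The $l$ contribution to $|u|^pu$ is handled using $l\in L^\infty_t(L^2\cap L^q)$ with $q<2$, which after Hölder and the $t^{-1/2}$ dispersive decay of $w$ gives an integrable-in-$t$ bound.

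The main obstacle I anticipate is the nonlinear estimate near the vertex, i.e. making the Strauss-type decay argument work \emph{uniformly down to $x=0$} on the half-line with the Dirichlet Dollard formula, and simultaneously absorbing the ``error'' term $u - e^{it\Delta_D}v_+ - l$. The error is only known to go to $0$ in $L^2(e_j)$, not in any dispersive norm, so one cannot simply plug it into the $t^{-p/2}$ bound; instead one must split $|u|^pu - |e^{it\Delta_D}v_+ + l|^p(e^{it\Delta_D}v_+ + l)$ and estimate the difference by the Hölder-type inequality $\big||a|^pa-|b|^pb\big|\lesssim (|a|^p+|b|^p)|a-b|$, then pair with $w$ and use Strichartz/trace bounds plus the $L^2$-smallness of the error together with Lebesgue dominated convergence in $t$ — this is exactly where the structure of \cite{MuNa19} is used, and I would follow their bookkeeping, with the only new point being that all spatial integrals are over $e_j=[0,\infty)$ and the dispersive kernel is the Dirichlet (odd-reflection) one. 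A secondary technical point is justifying the integration by parts and the differentiation under the integral sign (regularity of $u$ and the trace $u_j(\cdot,0+)$), which should follow from the local well-posedness and persistence-of-regularity results of \cite{GrIg19} by a density argument in the initial data.
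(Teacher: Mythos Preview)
Your setup is right --- the weak formula on $e_j$ with $w=e^{it\Delta_D}\varphi$, the absence of a $\partial_x u_j(t,0+)$ term, the Dollard factorization, and the splitting via $\big||a|^pa-|b|^pb\big|\lesssim(|a|^p+|b|^p)|a-b|$ are all exactly what the paper uses. But the overall logic of your argument is inverted, and this is a genuine gap. You write that $\int_{T_0}^\infty|\tbra{|u|^pu}{w}_j|\,dt<\infty$ and that the nonlinear integral ``enjoys a gain as $T_0\to\infty$''; this is false for $0<p<1$. The Dollard computation gives $\tbra{|u|^pu}{w}_j = t^{-p/2}\big(\tbra{F(\mathcal F v_+)}{\mathcal F\varphi}_j+o(1)\big)$, and $t^{-p/2}$ is \emph{not} integrable at infinity when $p<2$; the integral over $[1,\tau]$ grows like $\tau^{1-p/2}$. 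Likewise the boundary integral $\int u_j(t,0+)\overline{\partial_x w_j(t,0+)}\,dt$ does not converge: the paper shows only the growth bound $\int_1^\tau|\cdots|\,dt\lesssim\tau^{1/2}$, obtained by slicing $[1,\tau]$ into unit intervals and using local-in-time Strichartz control of $\|u_j\|_{L^4_tL^\infty_x}$ on each piece. There is no global-in-time $L^r_t$ bound on the trace $u_j(\cdot,0+)$ that would make this integral absolutely convergent.

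The paper's argument is by contradiction and by \emph{comparison of divergence rates}: assume $v_+\not\equiv0$, choose $\varphi$ (Lemma~\ref{lem3.5}) so that $\re\tbra{F(\mathcal F v_+)}{\mathcal F\varphi}_j\le-\delta<0$, and integrate the weak formula on $[1,\tau]$. The left-hand side $\re\big(i\tbra{u(\tau)}{w(\tau)}_j-i\tbra{u(1)}{w(1)}_j\big)$ is bounded uniformly in $\tau$ by $L^2$-conservation, while the right-hand side is bounded below by $\frac{\delta}{2}\tau^{1-p/2}-C\tau^{1/2}-C$. Since $p<1$ gives $1-\tfrac{p}{2}>\tfrac12$, the nonlinear growth dominates the boundary growth and the right side tends to $+\infty$, a contradiction. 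In short: long-range ($p<1$) means the nonlinear integral \emph{diverges}, and the proof exploits that divergence rather than trying to suppress it.
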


This seems to be far from Theorem \ref{thm2.1}. To replace $e^{it\Delta_{D}}$ by $e^{it\Delta_{K}}$, we show the following linear asymptotic lemma. 

\begin{lemma}
\label{lem2.3}
For any $u_{+} \in L^2(\mathcal{G})$, there exists $v_+ \in L^2(\mathcal{G})$ such that
\begin{align*}
	\norm{e^{it\Delta_{M}} P_{ac}(M) u_{+} - e^{it\Delta_{D}} v_+}_{L^2(\mathcal{G})} \to 0 \quad (t \to \infty)
\end{align*}
\end{lemma}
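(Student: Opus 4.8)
The plan is to prove Lemma \ref{lem2.3} by constructing the asymptotic profile $v_+$ through the wave-operator formalism, exploiting the fact that $e^{it\Delta_D}$ is a free (in the sense of decoupled half-lines) evolution, so one expects the wave operator $W_D := \slim_{t\to\infty} e^{-it\Delta_D} e^{it\Delta_M} P_{ac}(M)$ to exist. First I would recall the structure of $\Delta_D$: under the Dirichlet condition the star graph decouples into $n$ independent copies of the Dirichlet Laplacian on the half-line $[0,\infty)$, whose absolutely continuous spectrum is $[0,\infty)$ with a clean spectral representation via the sine transform. The operator $-\Delta_M$ is a finite-rank (boundary-condition) perturbation of $-\Delta_D$ in the resolvent sense: both are self-adjoint extensions of the same symmetric operator with deficiency indices $(n,n)$, so $(-\Delta_M + z)^{-1} - (-\Delta_D + z)^{-1}$ is a rank-$\le n$ operator for $z$ in the resolvent set. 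This puts us squarely in the setting of trace-class (indeed finite-rank) scattering theory.

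The key steps, in order: (1) Establish that $-\Delta_M$ and $-\Delta_D$ differ by a finite-rank perturbation of resolvents, using the explicit description of $\mathscr{D}(\Delta_M)$ and $\mathscr{D}(\Delta_D)$ as extensions of a common symmetric operator, or alternatively by writing both resolvents via Krein's resolvent formula in terms of boundary data. (2) Invoke the Kato--Rosenblum theorem (or the Birman--Kuroda theorem for finite-rank resolvent differences): the wave operators $W_{\pm}(\Delta_M, \Delta_D) = \slim_{t\to\pm\infty} e^{-it\Delta_D} e^{it\Delta_M} P_{ac}(-\Delta_M)$ exist, are complete, and are partial isometries from $\mathrm{Ran}\, P_{ac}(-\Delta_M)$ onto $\mathrm{Ran}\, P_{ac}(-\Delta_D) = L^2(\mathcal{G})$ (since $-\Delta_D$ has purely absolutely continuous spectrum). (3) Given $u_+ \in L^2(\mathcal{G})$, set $v_+ := W_+(\Delta_M,\Delta_D)\, P_{ac}(M) u_+ \in L^2(\mathcal{G})$. (4) Unwind the definition of the wave operator: $\norm{e^{-it\Delta_D} e^{it\Delta_M} P_{ac}(M) u_+ - v_+}_{L^2(\mathcal{G})} \to 0$ as $t\to\infty$, and since $e^{-it\Delta_D}$ is unitary, this is equivalent to $\norm{e^{it\Delta_M} P_{ac}(M) u_+ - e^{it\Delta_D} v_+}_{L^2(\mathcal{G})} \to 0$, which is exactly the claimed statement. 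The negative-time version follows identically with $W_-$.

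An alternative, more self-contained route — which may be preferable if the paper wants to avoid citing abstract scattering theory — is to use the explicit structure of the star graph. One diagonalizes $-\Delta_M$ restricted to its absolutely continuous part by generalized eigenfunctions (scattering solutions) $\psi(x,k)$, $k>0$, which on each edge $e_j$ are combinations of $e^{\pm ikx}$ with reflection/transmission coefficients determined by $M$; as $x\to\infty$ on any edge these are asymptotically outgoing/incoming plane waves, i.e.\ they match the Dirichlet generalized eigenfunctions $\sqrt{2/\pi}\,\sin(kx)$ up to a unitary (in $k$) change of basis among the $n$ channels. Comparing the two spectral representations yields an explicit unitary intertwiner, and the stationary phase / dominated convergence argument on the oscillatory integrals gives the $t\to\infty$ limit directly.

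The main obstacle I expect is step (1)/(2): one must verify carefully that the resolvent difference is genuinely finite-rank (this is true but requires handling the boundary-condition matrices $(A,B)$ in full generality, including degenerate cases like $\delta'$ where $A$ is not invertible), and — more subtly — that $W_+$ maps \emph{onto} all of $L^2(\mathcal{G})$, i.e.\ that it is asymptotically complete and that $-\Delta_D$ has no singular continuous or point spectrum (the latter is immediate: the Dirichlet half-line Laplacian is purely a.c.). Completeness here is standard for finite-rank resolvent perturbations, but one should state it as the reason $v_+$ exists for \emph{every} $u_+$ rather than just for a dense set; alternatively, existence of $W_+$ alone suffices for the lemma as stated (we only need to \emph{produce} some $v_+$, not to hit every target), which sidesteps completeness entirely and makes step (1)+(2) the only real work.
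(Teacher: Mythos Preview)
Your proposal is correct and follows essentially the same route as the paper: show that the resolvent difference $(-\Delta_D - i)^{-1} - (-\Delta_M - i)^{-1}$ is finite rank (hence trace class), then invoke the Kato--Kuroda--Birman theorem to obtain the wave operator $W_+ = \slim_{t\to\infty} e^{-it\Delta_D} e^{it\Delta_M} P_{ac}(M)$ and set $v_+ := W_+ u_+$. The only minor difference is that the paper establishes the finite-rank property by quoting the explicit Kostrykin--Schrader resolvent kernel for $-\Delta_M$ and subtracting directly, whereas you propose the (equivalent) route via Krein's formula or deficiency indices; your observation that mere existence of $W_+$ suffices (no completeness needed) is also exactly how the paper proceeds.
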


Combining Lemma \ref{lem2.2} and Lemma \ref{lem2.3} as $M=K$, we obtain Theorem \ref{thm2.1}. Corollary \ref{cor2} comes from Lemma \ref{lem2.2} and Lemma \ref{lem2.3} as $M$.


\section{Proofs}

\subsection{Proof of Lemma {\ref{lem2.2}}}
\label{sec3.1}

We define the dilation operator $\mathcal{D}$ and multiplier operator $\mathcal{M}$ by
\begin{align*}
	\mathcal{D}f(x)=(\mathcal{D}(t)f)(x)= (2it)^{-\frac{1}{2}} f\left(\frac{x}{2t}\right), \quad
	\mathcal{M}f(x)=(\mathcal{M}(t)f)(x)=e^{\frac{i|x|^2}{4t}} f(x),
\end{align*}
for a function $f$ on $\mathcal{G}$. These operators are invertible. Let $\widetilde{u}=\mathcal{D}^{-1}\mathcal{M}^{-1}u$, $\widetilde{w}=\mathcal{D}^{-1}\mathcal{M}^{-1}w$, and $\widetilde{l}=\mathcal{D}^{-1}\mathcal{M}^{-1}l$. We define the Fourier-Sine transformation $\mathcal{F}=\mathcal{F}_j^{\text{s}}$ by 
\begin{align*}
	\mathcal{F} f (\xi_j)=\mathcal{F}_j^{\text{s}} f (\xi_j) =\frac{1}{i} \sqrt{\frac{2}{\pi}} \int_0^{\infty} \sin (x_j\xi_j) f(x_j) dx_j \text{ for } \xi_j \in e_j.
\end{align*}
We remark that $\mathcal{F}$ is invertible and $\mathcal{F}^{-1}=-\mathcal{F}$. 
For the Fourier-Sine transformation $\mathcal{F}$, the following lemma holds as same as the usual Fourier transform.
\begin{lemma}[Hausdorff--Young inequality]
\label{lem3.1}
We have 
\begin{align*}
	\norm{\mathcal{F}_{j}^{\text{\rm s}} f}_{L^p(e_j)} \leq \norm{f}_{L^{p'}(e_j)}
\end{align*}
for any $p \in [2,\infty]$, where $p'$ denotes the H\"{o}lder conjugate of $p$. 
\end{lemma}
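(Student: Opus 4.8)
The plan is to prove the Hausdorff--Young inequality for the Fourier--Sine transform $\cF_j^{\text{s}}$ by reducing it to the classical Hausdorff--Young inequality on $\R$ via an odd extension. First I would observe that $\cF_j^{\text{s}}$ differs only by a harmless constant factor from the sine transform that appears when one restricts the ordinary Fourier transform to odd functions: if $F$ is the odd extension of $f$ to all of $\R$ (that is, $F(x)=f(x)$ for $x>0$ and $F(x)=-f(-x)$ for $x<0$), then a direct computation gives $\widehat{F}(\xi) = -\tfrac{2i}{\sqrt{2\pi}}\int_0^\infty \sin(x\xi)\,f(x)\,dx$, where $\widehat{\cdot}$ is the unitary Fourier transform on $\R$ with the $(2\pi)^{-1/2}$ normalization. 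Comparing constants, $\widehat{F}(\xi) = \mp\, 2\,(\cF_j^{\text{s}} f)(\xi)$ up to the explicit unimodular factor $1/i$ built into $\cF_j^{\text{s}}$; in any case $|\widehat F(\xi)| = 2\,|(\cF_j^{\text{s}} f)(\xi)|$ for a.e.\ $\xi$, and $\widehat F$ is itself an odd function.

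Next I would invoke the classical Hausdorff--Young inequality on $\R$ in the sharp-free form $\norm{\widehat F}_{L^p(\R)} \le \norm{F}_{L^{p'}(\R)}$, valid for all $p\in[2,\infty]$ (with the unitary normalization the constant is $1$; the endpoint $p=\infty$ is just Riemann--Lebesgue/the trivial $L^1\to L^\infty$ bound, and $p=2$ is Plancherel). Since $F$ is the odd extension of $f$, one has $\norm{F}_{L^{p'}(\R)}^{p'} = 2\norm{f}_{L^{p'}(e_j)}^{p'}$, and likewise $\norm{\widehat F}_{L^p(\R)}^p = 2\norm{\widehat F}_{L^p(0,\infty)}^p = 2\cdot 2^p \norm{\cF_j^{\text{s}} f}_{L^p(e_j)}^p$ by the identity from the previous step. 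Plugging these into the classical inequality and taking $p$-th roots yields $2^{1/p}\cdot 2\,\norm{\cF_j^{\text{s}} f}_{L^p(e_j)} \le 2^{1/p'}\norm{f}_{L^{p'}(e_j)}$, i.e.\ $\norm{\cF_j^{\text{s}} f}_{L^p(e_j)} \le 2^{1/p'-1/p-1}\norm{f}_{L^{p'}(e_j)}$; since $p\ge 2$ gives $1/p' - 1/p \le 0$, the prefactor is $\le \tfrac12 < 1$, which is even stronger than claimed, so in particular $\norm{\cF_j^{\text{s}} f}_{L^p(e_j)} \le \norm{f}_{L^{p'}(e_j)}$.

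I expect no serious obstacle here; the only point requiring a little care is bookkeeping the normalization constants ($\tfrac1i\sqrt{2/\pi}$ versus $(2\pi)^{-1/2}$, and the factor $2$ coming from symmetrizing the integral over $\R$), and making sure the odd-extension identity is justified first for nice $f$ (say Schwartz functions vanishing at $0$, or simple functions) and then extended by density to all of $L^{p'}(e_j)$ for $p'\in[1,2]$. For the endpoint $p=\infty$ one argues directly: $|(\cF_j^{\text{s}} f)(\xi)| \le \sqrt{2/\pi}\int_0^\infty |f| \le \norm{f}_{L^1(e_j)}$, so no extension argument is even needed there. Alternatively, if one prefers to avoid the constant-chasing entirely, one can run the Riesz--Thorin interpolation argument intrinsically on $e_j$ between the trivial $L^1 \to L^\infty$ bound and the Plancherel identity $\norm{\cF_j^{\text{s}} f}_{L^2(e_j)} = \norm{f}_{L^2(e_j)}$ for the sine transform; either route gives the stated inequality, and I would present the odd-extension reduction since it makes the $L^2$ isometry and the boundedness transparent in one stroke.
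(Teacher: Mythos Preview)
Your alternative route---interpolating directly on $e_j$ between the trivial $L^1\to L^\infty$ bound and the $L^2$ isometry of $\cF_j^{\text{s}}$---is exactly what the paper does, and it works cleanly with constant $1$. So the result is secure.

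However, your main approach via odd extension contains a constant error that is not harmless. With the unitary normalization $\widehat F(\xi)=(2\pi)^{-1/2}\int_\R e^{-ix\xi}F(x)\,dx$ and $F$ the odd extension of $f$, one has
\[
\widehat F(\xi)=\frac{-2i}{\sqrt{2\pi}}\int_0^\infty \sin(x\xi)f(x)\,dx,
\qquad
\cF_j^{\text{s}}f(\xi)=\frac{1}{i}\sqrt{\tfrac{2}{\pi}}\int_0^\infty\sin(x\xi)f(x)\,dx,
\]
so $\widehat F(\xi)=\cF_j^{\text{s}}f(\xi)$ exactly, not $|\widehat F|=2|\cF_j^{\text{s}}f|$ as you wrote. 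Feeding the correct identity into the classical Hausdorff--Young inequality gives
\[
2^{1/p}\norm{\cF_j^{\text{s}}f}_{L^p(e_j)}=\norm{\widehat F}_{L^p(\R)}\le\norm{F}_{L^{p'}(\R)}=2^{1/p'}\norm{f}_{L^{p'}(e_j)},
\]
hence $\norm{\cF_j^{\text{s}}f}_{L^p(e_j)}\le 2^{1/p'-1/p}\norm{f}_{L^{p'}(e_j)}$. Since $p\ge 2$ means $1/p'-1/p\ge 0$, this constant lies in $[1,2]$, not in $(0,\tfrac12]$, and it equals $1$ only at $p=2$. So the odd-extension-then-classical-HY route does \emph{not} deliver the constant $1$ claimed in the lemma; your sign error on the exponent masked this. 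The fix is precisely your alternative: use the odd extension only to verify the $L^2$ isometry, note the direct $L^1\to L^\infty$ bound $|\cF_j^{\text{s}}f(\xi)|\le\sqrt{2/\pi}\,\norm{f}_{L^1}<\norm{f}_{L^1}$, and interpolate. That is the paper's argument.
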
 
\begin{proof}
It obviously holds that $\norm{\mathcal{F}_{j}^{\text{s}} f}_{L^\infty(e_j)} \leq \norm{f}_{L^{1}(e_j)}$. Moreover, we have $L^2$-isometry, namely, $\norm{\mathcal{F}_{j}^{\text{s}} f}_{L^2(e_j)} =\norm{f}_{L^{2}(e_j)}$. This can be proved by extending $f$ to the odd function on the real line and $L^2$-isometry of the usual Fourier transform. 
\end{proof}

We use a contradiction argument to show Lemma \ref{lem2.2}. We suppose that $v_+ \not\equiv 0$. 
By multiplying $w=e^{it\Delta_{D}}\varphi$ such that $\varphi(0)=0$, which is a solution of $i \partial_t w +\Delta_{D} w =0$ with the initial data $\varphi$, which has the Dirichlet boundary condition, to the nonlinear equation \eqref{NLS} and integrating it on an edge $e_j$, we get
\begin{align}
\label{eq3.1}
	i \frac{d}{dt} \tbra{u}{w}_{j} =- u_j(t,0+) \overline{\partial_x w_j (t,0+)} - \tbra{F(u)}{w}_{j},
\end{align}
where we set $F(u)=\lambda |u|^p u$ for simplicity. We take sufficiently smooth and decaying fast $\varphi$.  We will estimate $u_j(t,0+) \overline{\partial_x w_j (t,0+)}$ and $\tbra{F(u)}{w}_{j}$. 
First, we treat $\tbra{F(u)}{w}_{j}$. Since $F$ is gauge invariant, we have 
\begin{align*}
	\tbra{F(u)}{w}_{j} = t^{-\frac{p}{2}} \tbra{F(\widetilde{u})}{\widetilde{w}}_{j}.
\end{align*}
\begin{lemma}
\label{lem3.2}
Let $0<p<1$. If $u$ is a forward-global solution of \eqref{NLS} satisfying
\begin{align*}
	\norm{u(t) - (e^{it\Delta_{D}} v_++l(t))}_{L^2(e_j)} \to 0 \quad (t \to \infty)
\end{align*}
for some $j \in \{1, ... , n\}$ and $v_+ \in L^2(e_j)$, then 
\begin{align*}
	\tbra{F(\widetilde{u})}{\widetilde{w}}_{j} = \tbra{F(\mathcal{F} v_+)}{\mathcal{F}\varphi}_{j} + o(1) \text{ as } t \to \infty.
\end{align*}
\end{lemma}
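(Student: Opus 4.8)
\textbf{Proof proposal for Lemma \ref{lem3.2}.}

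The plan is to analyze the bilinear expression $\tbra{F(\widetilde u)}{\widetilde w}_j$ by passing to the limit $t\to\infty$, exploiting the Dollard decomposition $e^{it\Delta_D}=\mathcal{M}\mathcal{D}\mathcal{F}\mathcal{M}$ to identify the limits of $\widetilde u=\mathcal{D}^{-1}\mathcal{M}^{-1}u$ and $\widetilde w=\mathcal{D}^{-1}\mathcal{M}^{-1}w$. First I would compute $\widetilde w$ exactly: since $w=e^{it\Delta_D}\varphi=\mathcal{M}\mathcal{D}\mathcal{F}\mathcal{M}\varphi$, we get $\widetilde w=\mathcal{D}^{-1}\mathcal{M}^{-1}w=\mathcal{F}\mathcal{M}\varphi$ on $e_j$, and because $\varphi$ is smooth and rapidly decaying, $\mathcal{M}(t)\varphi\to\varphi$ in $L^{p'}(e_j)$ (say, for the relevant conjugate exponent) as $t\to\infty$ by dominated convergence, so by the Hausdorff--Young inequality (Lemma \ref{lem3.1}) and $L^2$-isometry of $\mathcal{F}$, $\widetilde w\to\mathcal{F}\varphi$ in the appropriate $L^r(e_j)$-spaces. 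For $\widetilde u$, I would use the asymptotic hypothesis: writing $u(t)=e^{it\Delta_D}v_++l(t)+r(t)$ with $\|r(t)\|_{L^2(e_j)}\to 0$, applying $\mathcal{D}^{-1}\mathcal{M}^{-1}$ (an $L^2$-isometry up to the gauge factors), and using $e^{it\Delta_D}v_+=\mathcal{M}\mathcal{D}\mathcal{F}\mathcal{M}v_+$, one finds $\widetilde u=\mathcal{F}\mathcal{M}v_++\mathcal{D}^{-1}\mathcal{M}^{-1}l+\mathcal{D}^{-1}\mathcal{M}^{-1}r$. Again $\mathcal{F}\mathcal{M}v_+\to\mathcal{F}v_+$ in $L^2(e_j)$; the remainder term goes to $0$ in $L^2(e_j)$; and the crucial point is that the $l$-contribution must vanish.

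The key step is showing $\|\mathcal{D}^{-1}\mathcal{M}^{-1}l(t)\|$ is negligible in the sense needed. The dilation $\mathcal{D}^{-1}$ scales as $(2it)^{1/2}g(2tx)$, which preserves $L^2$ but spreads mass: for any fixed $q<2$, $\|\mathcal{D}^{-1}\mathcal{M}^{-1}l(t)\|_{L^2}=\|l(t)\|_{L^2}$ is merely bounded, so I cannot kill it in $L^2$. Instead I would observe that $\mathcal{D}^{-1}$ maps $L^q(e_j)$ to itself with norm $(2t)^{1/2-1/q}\to 0$ as $t\to\infty$ (since $q<2$ makes the exponent negative), so $\mathcal{D}^{-1}\mathcal{M}^{-1}l(t)\to 0$ in $L^q(e_j)$, using $l\in L^\infty((0,\infty):L^q(\mathcal{G}))$. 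Hence $\widetilde u=\mathcal{F}v_++\varrho(t)$ where $\varrho(t)\to 0$ in $L^2(e_j)+L^q(e_j)$. Then I would plug these into $\tbra{F(\widetilde u)}{\widetilde w}_j$ and use the continuity of the map $v\mapsto F(v)=\lambda|v|^pv$, which satisfies a Hölder-type bound $\||v|^pv-|v'|^pv'\|_{L^{r'}}\lesssim (\|v\|_{L^{s}}^p+\|v'\|_{L^{s}}^p)\|v-v'\|_{L^{(p+1)s/\cdots}}$ on the relevant $L^r$-scale, to conclude $F(\widetilde u)\to F(\mathcal{F}v_+)$ in the dual space of the one containing $\widetilde w$, and $\widetilde w\to\mathcal{F}\varphi$ in that space; pairing gives the claimed limit $\tbra{F(\mathcal F v_+)}{\mathcal F\varphi}_j+o(1)$.

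The main obstacle I anticipate is the bookkeeping of exponents: since $0<p<1$, the map $v\mapsto|v|^pv$ lands in $L^{(p+1)'}$ from $L^{p+1}$, and I must check that the pieces $\mathcal{F}v_+\in L^2(e_j)$ (via Hausdorff--Young, since $v_+\in L^2$ we only get $\mathcal F v_+\in L^2$, not better), the error $\varrho(t)$, and the test function $\mathcal{F}\varphi$ (which lies in every $L^r$, $r\ge 2$, since $\varphi$ is Schwartz-like) all sit in dual exponent pairs that make the pairing converge. The delicate case is the $L^q$-part of the error with $q$ close to $1$, whose Fourier side $\mathcal F v_+$ would need to be in $L^{q'}$ with $q'$ large—but $\mathcal F v_+$ is only $L^2$—so I would instead split $F(\widetilde u)-F(\mathcal F v_+)$ using the subadditivity-type inequality valid for $0<p<1$, namely $\big||a|^pa-|b|^pb\big|\lesssim |a-b|^{p+1}+|b|^p|a-b|$, which lets the $L^q$-error appear raised to a power or multiplied against the bounded $L^2$-norm of $\mathcal F v_+$, keeping everything integrable against $\mathcal F\varphi\in L^\infty\cap L^2$. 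Once the exponent constraints are arranged, the convergence is routine; I expect the authors' proof to proceed along exactly these lines, with the Dollard factorization of $e^{it\Delta_D}$ and the vanishing of $\mathcal D^{-1}l(t)$ in $L^q$ as the two substantive ingredients.
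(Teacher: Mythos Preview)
Your proposal is correct and follows essentially the same route as the paper: Dollard factorization $e^{it\Delta_D}=\mathcal M\mathcal D\mathcal F\mathcal M$ to get $\widetilde w=\mathcal F\mathcal M\varphi\to\mathcal F\varphi$ via Hausdorff--Young, the scaling $\|\widetilde l\|_{L^q}\lesssim t^{1/2-1/q}\|l\|_{L^q}\to 0$, convergence $\widetilde u-\mathcal Fv_+-\widetilde l\to 0$ in $L^2$, and then a pointwise bound on $F(a)-F(b)$ combined with H\"older.

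One refinement on the exponent bookkeeping you flagged. Your subadditivity split $|F(a)-F(b)|\lesssim|a-b|^{p+1}+|b|^p|a-b|$ does not close the $\widetilde l$-contribution by itself when $q<p+1$, since then $|\widetilde l|^{p+1}$ lands in $L^{q/(p+1)}$ with exponent below $1$ and cannot be paired with any $L^r$, $r\ge 2$. The paper sidesteps this by using the $L^2$-boundedness of $\widetilde l$ (which you already noted) to interpolate: $\|\widetilde l\|_{L^{2-}}\le \|\widetilde l\|_{L^2}^{1-\theta}\|\widetilde l\|_{L^q}^\theta\to 0$. With $\widetilde l$ placed in $L^{2-}$, the simpler Lipschitz bound $|F(a)-F(b)|\lesssim(|a|^p+|b|^p)|a-b|$ and H\"older give $F(\widetilde u)-F(\mathcal Fv_+)\to 0$ in $L^{2/(p+1)}+L^{2/(p+1)-}$, which pairs against $\widetilde w\in L^{2/(1-p)}\cap L^{2/(1-p)+}$. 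Since you already have both ingredients for the interpolation, this is just a cleaner way to finish the step you identified as delicate.
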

\begin{proof}
We show the following two estimates.
\begin{align}
\label{eq3.2}
	\widetilde{w} &= \mathcal{F} \varphi + o(1) \text{ in } L^2 (e_j) \cap L^{\infty} (e_j) \text{ as } t \to \infty,
	\\
\label{eq3.3}
	F(\widetilde{u}) &= F(\mathcal{F} v_+) + o(1) \text{ in } L^{\frac{2}{p+1}} (e_j) \cap L^{\frac{2}{p+1}-} (e_j) \text{ as } t \to \infty,
\end{align}
where we write $a\pm$ as $a\pm \varepsilon$ for some sufficiently small $\varepsilon>0$. It is worth emphasizing that $L^{\frac{2}{p+1}-} (e_j)$ will be used to treat $l$. 
First, we show \eqref{eq3.2}.  Since $e^{it\Delta_{D}}=\mathcal{M}\mathcal{D}\mathcal{F}\mathcal{M}$, we have $\widetilde{w}=\mathcal{F}\mathcal{M} \varphi$. Thus, it follows that
\begin{align*}
	\norm{\widetilde{w} - \mathcal{F}\varphi}_{L^{r}(e_j)} 
	=\norm{\mathcal{F} (M-1) \varphi}_{L^{r}(e_j)} 
	\leq \norm{(M-1)\varphi}_{L^{r'}(e_j)}
	\cleq \frac{1}{|t|} \norm{|x|^2 \varphi}_{L^{r'}(e_j)},
\end{align*}
uniformly in $t$ for $2\leq r \leq \infty$. This means \eqref{eq3.2}. 

Next, we prove \eqref{eq3.3}. We have
\begin{align*}
	u-e^{it\Delta_{D}} v_+ -l
	=\mathcal{M}\mathcal{D}(\widetilde{u} - \mathcal{F} v_+ - \widetilde{l}) + \mathcal{M}\mathcal{D}\mathcal{F}(1-\mathcal{M})v_+ \text{ on } e_j.
\end{align*}
Let $I:=u-e^{it\Delta_{D}} v_+ -l$ and $I\!\!I:=\mathcal{M}\mathcal{D}\mathcal{F}(1-\mathcal{M})v_+$. By the assumption, $\norm{I}_{L^2(e_j)} \to 0$ as $t \to \infty$. We also have $\norm{I\!\!I}_{L^2(e_j)}=\norm{(1-\mathcal{M})v_+}_{L^2(e_j)} \to 0$ as $t \to \infty$ by $L^2$-isometry of $D$ and $\mathcal{F}$ and the Lebesgue dominated convergence theorem. Therefore, we obtain $\|\widetilde{u} - \mathcal{F} v_+ - \widetilde{l} \| _{L^2(e_j)} \to 0$ as $t \to \infty$. Moreover, we have $\| \widetilde{l}\|_{L^{q}}= t^{-(1/q-1/2)} \norm{l}_{L^q} \to 0$ for $1<q<2$. Thus, we get the following. 
\begin{align*}
	&\norm{F(\tilde{u}) - F(\mathcal{F}v_+) }_{L^{\frac{2}{p+1}} (e_j) + L^{\frac{2}{p+1}-} (e_j)}
	\\& \quad \cleq ( \| \widetilde{u} - \mathcal{F}v_+-\widetilde{l} \|_{L^2} + \| \widetilde{l}\|_{L^{2-}} ) (\norm{\widetilde{u}}_{L^{2}}^{p} + \norm{\mathcal{F}v_+}_{L^2}^p)
	\\& \quad \to 0
\end{align*}
as $t \to \infty$. Combining \eqref{eq3.2} and \eqref{eq3.3}, we get
\begin{align*}
	&| \tbra{F(\widetilde{u})}{\widetilde{w}}_{j} - \tbra{F(\mathcal{F} v_+)}{\mathcal{F}\varphi}_{j}|
	\\
	&\quad \leq | \tbra{F(\widetilde{u})}{\widetilde{w}}_{j} - \tbra{F(\mathcal{F} v_+)}{\widetilde{w}}_{j}|
	+|  \tbra{F(\mathcal{F} v_+)}{\widetilde{w}}_{j} - \tbra{F(\mathcal{F} v_+)}{\mathcal{F}\varphi}_{j}|
	\\
	& \quad \leq \norm{F(\tilde{u}) - F(\mathcal{F}v_+)}_{L^{\frac{2}{p+1}}  + L^{\frac{2}{p+1}-}} \norm{\widetilde{w}}_{L^{\frac{2}{1-p}} \cup L^{\frac{2}{1-p}+}}
	+\norm{v_+}_{L^2}^{p+1} \norm{\widetilde{w}-\mathcal{F}\varphi}_{L^{\frac{2}{1-p}}} 
	\\
	&\quad  \to 0 \text{ as } t \to \infty.
\end{align*}
\end{proof}

Secondly, we estimate $u_j(t,0+) \overline{\partial_x w_j (t,0+)}$ in \eqref{eq3.1}. 
\begin{lemma}
\label{lem3.3}
If $\varphi \in C_{0}^{\infty}([0,\infty))$ and $\varphi(0)=0$, it holds that
\begin{align*}
	\int_{t_0}^{t_0+T} \left|u_j(t,0+) \overline{\partial_x w_j (t,0+)}\right| dt 
	\cleq T^{\frac{3}{4}} t_0 ^{-\frac{1}{2}} \norm{u_j}_{L^4 (t_0,t_0+T; L^{\infty}(e_j))}.
\end{align*}
\end{lemma}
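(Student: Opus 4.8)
The plan is to exploit the Dollard factorization $e^{it\Delta_D}=\mathcal{M}\mathcal{D}\mathcal{F}\mathcal{M}$ already recorded above (with $\mathcal{F}=\mathcal{F}_j^{\text{s}}$) to put $w$ in a form that is explicitly differentiable in $x$, read off $\partial_x w_j(t,0+)$ and bound it pointwise by $C_\varphi\, t^{-3/2}$, and then close with Hölder's inequality in the time variable.

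First I would write, on $e_j=[0,\infty)$,
\begin{align*}
	w_j(t,x)=e^{\frac{ix^2}{4t}}(2it)^{-\frac12}\,\widetilde{w}_j\big(\tfrac{x}{2t}\big),\qquad \widetilde{w}=\mathcal{F}\mathcal{M}\varphi .
\end{align*}
Since $\varphi\in C_0^\infty([0,\infty))$, the function $\mathcal{M}(t)\varphi=e^{\frac{ix^2}{4t}}\varphi$ is smooth with the same compact support, hence $\widetilde{w}_j=\mathcal{F}_j^{\text{s}}(\mathcal{M}(t)\varphi)$ is smooth with all derivatives bounded uniformly in $t>0$, and differentiation under the integral in the definition of $\mathcal{F}_j^{\text{s}}$ is legitimate. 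Then I would differentiate the displayed formula in $x$ and set $x=0$: the term produced by $\partial_x e^{\frac{ix^2}{4t}}$ carries a factor $x$ and drops out, leaving
\begin{align*}
	\partial_x w_j(t,0+)=(2it)^{-\frac12}(2t)^{-1}\,\widetilde{w}_j'(0),\qquad
	\widetilde{w}_j'(0)=\frac1i\sqrt{\frac2\pi}\int_0^\infty x\,e^{\frac{ix^2}{4t}}\varphi(x)\,dx .
\end{align*}
Since $|\widetilde{w}_j'(0)|\leq \sqrt{2/\pi}\,\norm{x\varphi}_{L^1(e_j)}$ uniformly in $t$, this yields the key pointwise bound $|\partial_x w_j(t,0+)|\cleq_\varphi t^{-3/2}$ for all $t>0$. (The hypothesis $\varphi(0)=0$ plays no role here beyond ensuring $w(t)\in\mathscr{D}(\Delta_D)$, which is what makes the weak identity \eqref{eq3.1} valid; note in passing $\widetilde{w}_j(0)=0$ automatically, consistent with $w_j(t,0)=0$.)

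Finally I would use $|u_j(t,0+)|\leq \norm{u_j(t)}_{L^\infty(e_j)}$ together with Hölder in time with exponents $4$ and $4/3$:
\begin{align*}
	\int_{t_0}^{t_0+T}\left|u_j(t,0+)\overline{\partial_x w_j(t,0+)}\right|dt
	&\cleq_\varphi \int_{t_0}^{t_0+T}\norm{u_j(t)}_{L^\infty(e_j)}\,t^{-\frac32}\,dt\\
	&\leq \norm{u_j}_{L^4(t_0,t_0+T;L^\infty(e_j))}\left(\int_{t_0}^{t_0+T}t^{-2}\,dt\right)^{\frac34},
\end{align*}
and bound $\int_{t_0}^{t_0+T}t^{-2}\,dt\leq T t_0^{-2}$; using $t_0\geq 1$ to replace $t_0^{-3/2}$ by $t_0^{-1/2}$ then gives $T^{3/4}t_0^{-1/2}\norm{u_j}_{L^4(t_0,t_0+T;L^\infty(e_j))}$, which is the claim. (Equivalently, bound $t^{-3/2}\leq t_0^{-3/2}$ on $[t_0,t_0+T]$ and use $\norm{\cdot}_{L^1}\leq T^{3/4}\norm{\cdot}_{L^4}$.)

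There is no serious obstacle — the whole thing is a short computation. The only points that deserve a word of care are the term-by-term $x$-differentiation of $w_j$ and the $t$-uniform bound on $\widetilde{w}_j'(0)$, both immediate from $\varphi$ being smooth and compactly supported; and the bookkeeping of exponents, where the pair $(4,4/3)$ is forced by the $t^{-3/2}$ decay and it is important to keep the factor $T^{3/4}$ (rather than the a priori sharper $t_0^{-3/4}$), since in the application this bound is used on a window $T\sim t_0$, where $T^{3/4}t_0^{-1/2}\sim t_0^{1/4}$ will have to be dominated by the lower bound for $\tbra{F(u)}{w}_j$ over the same window.
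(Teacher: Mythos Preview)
Your argument is correct but follows a different route from the paper. The paper does not use the Dollard factorization here; instead it writes the Dirichlet propagator via the explicit kernel
\[
w_j(t,x)=\frac{1}{\sqrt{4\pi i t}}\int_0^\infty\Big(e^{\frac{i|x-y|^2}{4t}}-e^{\frac{i|x+y|^2}{4t}}\Big)\varphi(y)\,dy,
\]
integrates by parts once in $y$ (this is where the hypothesis $\varphi(0)=0$ enters, killing the boundary term) to obtain
\[
\partial_x w_j(t,x)=\frac{1}{\sqrt{4\pi i t}}\int_0^\infty\Big(e^{\frac{i|x-y|^2}{4t}}+e^{\frac{i|x+y|^2}{4t}}\Big)\varphi'(y)\,dy,
\]
and reads off the uniform bound $\norm{\partial_x w}_{L^\infty([t_0,t_0+T]\times e_j)}\le t_0^{-1/2}\norm{\varphi'}_{L^1}$. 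H\"older in time with exponents $4$ and $4/3$ then gives the stated estimate directly, valid for every $t_0>0$.

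Your factorization approach is cleaner at the single point $x=0$ and actually yields the sharper decay $|\partial_x w_j(t,0+)|\cleq t^{-3/2}$ (equivalently, one more integration by parts in the paper's formula, again using $\varphi(0)=0$, recovers this). The price is that your final bound is $T^{3/4}t_0^{-3/2}$, which only implies the lemma as stated under the extra assumption $t_0\ge 1$; you note this and it is harmless for the application (Lemma~\ref{cor3.4} uses $t_0\ge 1$), but strictly speaking the paper proves the inequality for all $t_0>0$ whereas your argument does not. One small correction: contrary to your parenthetical remark, in the paper's computation the hypothesis $\varphi(0)=0$ \emph{is} used in the passage from $w$ to $\partial_x w$ (to drop the boundary contribution at $y=0$), not only to make the weak identity \eqref{eq3.1} valid.
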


\begin{proof}
We have
\begin{align*}
	\text{(L.H.S)} 
	\leq T^{\frac{3}{4}} \norm{u_j}_{L^4 (t_0,t_0+T; L^{\infty}(e_j))} \norm{\partial_x w}_{L^{\infty}([t_0,t_0+T]\times e_j)}.
\end{align*}
Since we have
\begin{align*}
	w(t,x_j)= e^{it\Delta_{D}} \varphi= \frac{1}{\sqrt{4\pi i t}} \int_{0}^{\infty} \left( e^{\frac{i|x_j-y_j|^2}{4t}} - e^{\frac{i|x_j+y_j|^2}{4t}}\right) \varphi(y_j) dy_j,
\end{align*}
we get
\begin{align*}
	\partial_x w(t,x_j) =  \frac{1}{\sqrt{4\pi i t}} \int_{0}^{\infty} \left( e^{\frac{i|x_j-y_j|^2}{4t}} + e^{\frac{i|x_j+y_j|^2}{4t}}\right) \varphi'(y_j) dy_j
\end{align*}
and thus $\norm{\partial_x w}_{L^{\infty}([t_0,t_0+T]\times e_j)} \leq t_0^{-\frac{1}{2}} \norm{\varphi'}_{L^1(e_j)}$. 
\end{proof}

By the local well-posedness and $L^2$-conservation law, we get
\begin{align}
\label{eq3.4}
	 \norm{u_j}_{L^4 (t_0,t_0+T; L^{\infty}(e_j))} 
	 \leq \norm{u_j}_{L^4 (t_0,t_0+T; L^{\infty}(\mathcal{G}))} 
	  \cleq \norm{u_0}_{L^2(\mathcal{G})}
\end{align}
for sufficiently small $T>0$ (see \cite[Proof of Theorem B]{GrIg19}). Thus we get the following lemma.

\begin{lemma}
\label{cor3.4}
If $\varphi  \in C_{0}^{\infty}([0,\infty))$ and $\varphi(0)=0$, it is true that
\begin{align*}
	\int_{1}^{\tau} \left|u_j(t,0+) \overline{\partial_x w_j (t,0+)}\right| dt 
	\cleq \tau^{1/2}
\end{align*}
for any $\tau >1$. 
\end{lemma}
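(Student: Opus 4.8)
The plan is to integrate the pointwise bound from Lemma \ref{lem3.3} over dyadic time blocks and sum the resulting geometric-type series. Concretely, I would partition the interval $(1,\tau)$ into consecutive pieces $(t_0, t_0 + T)$ with $T$ chosen to be the small but fixed local existence time $T_0 > 0$ furnished by the local well-posedness theory of \cite{GrIg19}, which is what makes \eqref{eq3.4} valid on each such block with the uniform bound $\norm{u_j}_{L^4(t_0, t_0 + T_0; L^\infty(e_j))} \cleq \norm{u_0}_{L^2(\mathcal{G})}$ (this is where $L^2$-conservation enters, so the constant is genuinely independent of which block we sit on). On each block $[t_0, t_0 + T_0]$ with $t_0 \geq 1$, Lemma \ref{lem3.3} then gives
\begin{align*}
	\int_{t_0}^{t_0 + T_0} \left| u_j(t, 0+) \overline{\partial_x w_j(t, 0+)} \right| dt \cleq T_0^{3/4} \, t_0^{-1/2} \norm{u_0}_{L^2(\mathcal{G})} \cleq t_0^{-1/2}.
\end{align*}

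Next I would sum over the $\sim \tau / T_0$ blocks tiling $(1, \tau)$. Writing $t_0 = 1 + k T_0$ for $k = 0, 1, \dots, N$ with $N \sim \tau / T_0$, the total is bounded by a constant times
\begin{align*}
	\sum_{k=0}^{N} (1 + k T_0)^{-1/2} \cleq \int_{0}^{\tau} (1 + s)^{-1/2} \, ds \cleq \tau^{1/2},
\end{align*}
by comparing the sum to an integral. Here the decay exponent $-1/2$ is exactly the borderline that produces the stated $\tau^{1/2}$ growth rather than something bounded; this is the crux of why the estimate is only $O(\tau^{1/2})$ and not $O(1)$, and it is also why this particular power is harmless later when divided against the $t^{-p/2}$ gain in the nonlinear term (since $1/2 < 1 - p/2$ for $0 < p < 1$). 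One should be mildly careful about the last, possibly incomplete, block near $\tau$, but that contributes at most $O(\tau^{-1/2}) = O(1)$ and is absorbed.

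The only real subtlety — and the step I expect to require the most care — is ensuring that the local well-posedness time $T_0$ can indeed be taken uniform along the whole trajectory. This is not automatic from a generic local theory, but it holds here because the $L^2$ norm is conserved and the local existence time in the $L^2$-subcritical regime $0 < p < 1$ depends only on $\norm{u(t_0)}_{L^2(\mathcal{G})} = \norm{u_0}_{L^2(\mathcal{G})}$; hence a single $T_0 = T_0(\norm{u_0}_{L^2})$ works for every starting time $t_0$, and the Strichartz-type bound \eqref{eq3.4} propagates with a fixed constant. Everything else is the dyadic summation above. Once Lemma \ref{cor3.4} is in hand, combined with Lemma \ref{lem3.2} it will control both terms on the right-hand side of \eqref{eq3.1}, and integrating \eqref{eq3.1} in $t$ will force the would-be limiting constant $\tbra{F(\mathcal{F} v_+)}{\mathcal{F}\varphi}_j$ to vanish for all admissible test functions $\varphi$, contradicting $v_+ \not\equiv 0$ on $e_j$ and thereby proving Lemma \ref{lem2.2}.
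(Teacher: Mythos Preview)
Your proposal is correct and follows essentially the same approach as the paper: partition $[1,\tau]$ into blocks of fixed length $T_0$ determined by the $L^2$-subcritical local theory (so that \eqref{eq3.4} holds uniformly by mass conservation), apply Lemma~\ref{lem3.3} on each block, and sum the resulting $t_0^{-1/2}$ contributions to obtain $\tau^{1/2}$. Your discussion of why $T_0$ may be taken uniform is exactly the point the paper also relies on.
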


\begin{proof}
Let $T \in (0,1)$ be a small real number satisfying \eqref{eq3.4} and $[a]$ denote the integer part of $a$. Then, we have
\begin{align*}
	\int_{1}^{\tau} \left|u_j(t,0+) \overline{\partial_x w_j (t,0+)}\right| dt 
	&\leq \sum_{n=1}^{[\tau-T]+1} \int_{n}^{n+T} \left|u_j(t,0+) \overline{\partial_x w_j (t,0+)}\right| dt.
\end{align*}
It follows from Lemma \ref{lem3.3} and \eqref{eq3.4} that
\begin{align*}
	\cdots &\cleq \sum_{n=1}^{[\tau-T]+1} T^{\frac{3}{4}} n ^{-\frac{1}{2}} \norm{u_j}_{L^4 (n,n+T; L^{\infty}(e_j))}
	\\
	&\cleq T^{\frac{3}{4}} \norm{u_0}_{L^2(\mathcal{G})}  \sum_{n=1}^{[\tau-T]+1}  n ^{-\frac{1}{2}} 
	\\
	&\cleq  T^{\frac{3}{4}} \norm{u_0}_{L^2(\mathcal{G})}   \tau^{1/2}. 
\end{align*}
\end{proof}

\begin{lemma}
\label{rmkA.3}
Let $r \in [2,\infty)$ and $f \in L^r([0,\infty))$. For any $\varepsilon>0$, there exists $\varphi \in \{ f \in C_{0}^{\infty}([0,\infty)): f(0)=0\}$ such that
\begin{align*}
	\norm{f-\mathcal{F}\varphi}_{L^r(0,\infty)} \leq \varepsilon.
\end{align*}
\end{lemma}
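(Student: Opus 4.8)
The plan is to exploit that the Fourier--Sine transform $\mathcal{F}$ is, up to sign, its own inverse ($\mathcal{F}^{-1}=-\mathcal{F}$), together with the Hausdorff--Young inequality of Lemma~\ref{lem3.1}. \textbf{Step 1 (reduction to nice data).} The space $C_{0}^{\infty}((0,\infty))$ of smooth functions with compact support inside the open half-line is dense in $L^{r}([0,\infty))$ for every $r\in[1,\infty)$, so it suffices to prove: for each $g\in C_{0}^{\infty}((0,\infty))$ and each $\varepsilon>0$ there is $\varphi\in\{\psi\in C_{0}^{\infty}([0,\infty)):\psi(0)=0\}$ with $\norm{g-\mathcal{F}\varphi}_{L^{r}(0,\infty)}\le\varepsilon$; choosing first $g$ with $\norm{f-g}_{L^{r}}\le\varepsilon/2$ and then such a $\varphi$ with $\norm{g-\mathcal{F}\varphi}_{L^{r}}\le\varepsilon/2$ concludes by the triangle inequality.

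\textbf{Step 2 (the candidate $-\mathcal{F}g$).} Fix $g\in C_{0}^{\infty}((0,\infty))$. Its odd extension to $\R$ lies in $C_{0}^{\infty}(\R)$, so $\mathcal{F}g$ is (a constant multiple of) the restriction to $[0,\infty)$ of the Fourier transform of a $C_{0}^{\infty}(\R)$ function; hence $\mathcal{F}g$ is smooth and rapidly decaying with all its derivatives, and in particular $\mathcal{F}g\in L^{s}(0,\infty)$ for every $s\in[1,\infty]$. Moreover $\mathcal{F}g(0)=0$, simply because $\sin(0\cdot x)=0$. Since $\mathcal{F}^{-1}=-\mathcal{F}$ on such nice functions, we have $g=\mathcal{F}(-\mathcal{F}g)$. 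The only reason $-\mathcal{F}g$ fails to be admissible is that it need not be compactly supported.

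\textbf{Step 3 (truncation and Hausdorff--Young).} Pick a cutoff $\chi_{R}\in C_{0}^{\infty}([0,\infty))$ with $0\le\chi_{R}\le1$, $\chi_{R}\equiv1$ on $[0,R]$, $\supp\chi_{R}\subset[0,2R]$, and set $\varphi_{R}:=-\chi_{R}\,\mathcal{F}g$. Then $\varphi_{R}\in C_{0}^{\infty}([0,\infty))$ and $\varphi_{R}(0)=-\chi_{R}(0)\,\mathcal{F}g(0)=0$, so $\varphi_{R}$ is admissible. Using $g=\mathcal{F}(-\mathcal{F}g)$, linearity, and Lemma~\ref{lem3.1} (which needs $r\ge2$, hence $r'\le2$),
\begin{align*}
\norm{g-\mathcal{F}\varphi_{R}}_{L^{r}(0,\infty)}
&=\norm{\mathcal{F}\bigl((-\mathcal{F}g)-\varphi_{R}\bigr)}_{L^{r}(0,\infty)}
\\
&\le\norm{(1-\chi_{R})\,\mathcal{F}g}_{L^{r'}(0,\infty)} .
\end{align*}
Since $\mathcal{F}g\in L^{r'}(0,\infty)$ and $1-\chi_{R}\to0$ pointwise with $|1-\chi_{R}|\le1$, dominated convergence gives $\norm{(1-\chi_{R})\,\mathcal{F}g}_{L^{r'}}\to0$ as $R\to\infty$; taking $R$ large yields $\norm{g-\mathcal{F}\varphi_{R}}_{L^{r}}\le\varepsilon/2$.

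\textbf{Where the work is.} The argument goes through precisely because $\mathcal{F}$ is (almost) an involution, so that $g$ can be \emph{pulled back} through $\mathcal{F}$ rather than approximated directly; a direct construction would collide with the fact that Hausdorff--Young for $\mathcal{F}$ controls $L^{r}$ by $L^{r'}$ only in the range $r\ge2$, which is exactly the hypothesis of the lemma. The sole non-formal ingredient is the smoothness and rapid decay of $\mathcal{F}g$ used in Step~2, i.e.\ the classical fact that the Fourier sine transform of a $C_{0}^{\infty}$ function (equivalently, the Fourier transform of its odd $C_{0}^{\infty}$ extension) is Schwartz; I expect this, rather than any estimate, to be the only point needing a couple of lines of justification.
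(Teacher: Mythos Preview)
Your proof is correct and uses the same two ingredients as the paper---density of smooth compactly supported functions vanishing at the origin, and the Hausdorff--Young inequality---but you organize them in a slightly different and in fact more careful way. The paper argues directly: it asserts that $\mathcal{F}^{-1}f\in L^{r'}$, approximates $\mathcal{F}^{-1}f$ in $L^{r'}$ by some $\varphi\in C_0^\infty$ with $\varphi(0)=0$, and then applies Hausdorff--Young to pass back to $L^r$. Your version instead first replaces $f$ by a test function $g\in C_0^\infty((0,\infty))$ using density in $L^r$, and only then pulls back through $\mathcal{F}$ and truncates. The advantage of your route is that the object you need in $L^{r'}$, namely $\mathcal{F}g$, is manifestly Schwartz; by contrast the paper's claim that $\mathcal{F}^{-1}f\in L^{r'}$ for a general $f\in L^r$ with $r>2$ is delicate (Hausdorff--Young goes $L^{r'}\to L^r$, not the other way), so your preliminary reduction to nice data sidesteps that issue entirely.
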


\begin{proof}
Let $r':=\frac{r}{r-1} \in (1,2]$. 
The embedding $C_{0,{\rm odd}}^{\infty}(\mathbb{R}) \subset L_{\rm odd}^{r'}(\mathbb{R})$ is dense in $L^{r'}$-topology, where $X_{{\rm odd}}=\{f \in X: f(x)=-f(-x)\}$. Indeed, the density $C_{0}^{\infty}(\mathbb{R}) \subset L^r(\mathbb{R})$ can be shown by a mollifier argument and the mollifier of an odd function $f$ can be also odd. This density implies that the embedding $\{ f \in C_{0}^{\infty}([0,\infty)): f(0)=0\} \subset \{ f \in L^{r'}([0,\infty))\}$ is dense in $L^{r'}$-topology. By the Hausdorff--Young inequality, Lemma \ref{lem3.1}, $\mathcal{F}^{-1}f$ belongs to $L^{r'}([0,\infty))$.

for $r \geq 2$, $f \in L^{r}$, and $\varepsilon>0$, there exists $\varphi \in \{ f \in C_{0}^{\infty}([0,\infty)): f(0)=0\}$ such that
\begin{align*}
	\norm{f-\mathcal{F}\varphi}_{L^r} \leq \norm{\mathcal{F}^{-1}f - \varphi}_{L^{r'}} \leq \varepsilon.
\end{align*}
\end{proof}

\begin{lemma}
\label{lem3.5}
There exists $\varphi \in \{f \in C_{0}^{\infty}([0,\infty)): f(0)=0\}$ and $\delta>0$ such that $\re \tbra{F(\mathcal{F} v_+)}{\mathcal{F}\varphi}_{j} <-\delta$ provided that $v_+ \neq 0$.  
\end{lemma}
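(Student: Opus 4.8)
The plan is to choose the test function $\varphi$ so that $\mathcal{F}\varphi$ is close, in the relevant Lebesgue norms, to $\mathcal{F}v_+$ itself; then $\tbra{F(\mathcal{F}v_+)}{\mathcal{F}\varphi}_j$ is close to $\tbra{F(\mathcal{F}v_+)}{\mathcal{F}v_+}_j = \lambda\,\tbra{|\mathcal{F}v_+|^p\mathcal{F}v_+}{\mathcal{F}v_+}_j = \lambda\,\norm{\mathcal{F}v_+}_{L^{p+2}(e_j)}^{p+2}$, which is a strictly negative real number when $\lambda=-1$ (and one instead uses $-\varphi$, or notes $\mathcal F$ can be composed with $\mathcal M$-type phases, when $\lambda=+1$). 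Since $v_+\not\equiv 0$ on $e_j$ is assumed in the contradiction hypothesis of Lemma~\ref{lem2.2}, and $\mathcal{F}$ is an $L^2$-isometry (hence injective), $\mathcal{F}v_+\not\equiv 0$ on $e_j$, so $\norm{\mathcal{F}v_+}_{L^{p+2}(e_j)}>0$ once we know $\mathcal F v_+\in L^{p+2}(e_j)$.

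First I would record that $g:=\mathcal{F}v_+$ lies in $L^2(e_j)$ and, after a small truncation, in $L^{p+2}(e_j)\cap L^{(p+2)-}(e_j)$ as well. Strictly speaking $\mathcal F v_+$ need only be $L^2$ a priori, so I would first replace $v_+$ by a nearby compactly supported bounded function $\tilde v_+$ with $\tilde v_+\not\equiv 0$ on $e_j$; then $\mathcal F \tilde v_+$ is bounded and in every $L^r(e_j)$, $r\in[2,\infty]$, by Hausdorff--Young (Lemma~\ref{lem3.1}), and in particular $F(\mathcal F\tilde v_+)\in L^{(p+2)/(p+1)}(e_j)$ with nonzero real pairing against $\mathcal F\tilde v_+$. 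Having fixed such a target, I would apply Lemma~\ref{rmkA.3} with $f=\tilde v_+$ (or directly $f=\mathcal F\tilde v_+$, using $\mathcal F^{-1}=-\mathcal F$) to obtain $\varphi\in\{f\in C_0^\infty([0,\infty)):f(0)=0\}$ with $\norm{\mathcal F\varphi - \mathcal F\tilde v_+}_{L^r(e_j)}$ as small as we like for a fixed $r\in[2,\infty)$ chosen to match the duality exponent needed below.

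Next I would estimate
\[
\bigl|\tbra{F(\mathcal F v_+)}{\mathcal F\varphi}_j - \tbra{F(\mathcal F\tilde v_+)}{\mathcal F\tilde v_+}_j\bigr|
\]
by splitting into the difference $\tbra{F(\mathcal F v_+)-F(\mathcal F\tilde v_+)}{\mathcal F\varphi}_j$ and $\tbra{F(\mathcal F\tilde v_+)}{\mathcal F\varphi-\mathcal F\tilde v_+}_j$. The first is controlled using the pointwise bound $|F(a)-F(b)|\cleq |a-b|(|a|^p+|b|^p)$ together with Hölder, which forces $v_+$ and $\tilde v_+$ to be close in $L^2\cap L^\infty(e_j)$-type norms — this is why I approximate $v_+$ by $\tilde v_+$ first, in the same space where the approximation is available. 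The second term is $\cleq \norm{F(\mathcal F\tilde v_+)}_{L^{(p+2)/(p+1)}(e_j)}\norm{\mathcal F\varphi-\mathcal F\tilde v_+}_{L^{p+2}(e_j)}$, which is small by the choice of $\varphi$. Choosing all approximation errors small relative to $\tfrac12\norm{\mathcal F\tilde v_+}_{L^{p+2}(e_j)}^{p+2}$ and then setting $\delta:=\tfrac14\norm{\mathcal F\tilde v_+}_{L^{p+2}(e_j)}^{p+2}>0$ gives $\re\tbra{F(\mathcal F v_+)}{\mathcal F\varphi}_j<-\delta$ (adjusting the sign of $\varphi$ according to $\lambda$).

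The main obstacle is the a priori low regularity of $\mathcal F v_+$: since only $v_+\in L^2(e_j)$ is known, $F(\mathcal F v_+)$ need not lie in any nice space and $\norm{\mathcal F v_+}_{L^{p+2}}$ could in principle be infinite, which would sabotage the quantitative lower bound. The fix is exactly the density/truncation step above — replace $v_+$ by a compactly supported bounded approximant $\tilde v_+$, carry out the estimate for $\tilde v_+$, and absorb the $v_+\to\tilde v_+$ error. One must be a little careful that the pairing $\tbra{F(\mathcal F v_+)}{\mathcal F\varphi}_j$ on the left is nonetheless well defined for any fixed $\varphi\in C_0^\infty$ with $\varphi(0)=0$: here $\mathcal F\varphi$ is Schwartz-like, so $\mathcal F\varphi\in L^{(p+2)/(1-p)}(e_j)$ and pairs against $F(\mathcal F v_+)\in L^{(p+2)/(p+1)}(e_j)$ (the latter since $F(\mathcal F v_+)=|\mathcal F v_+|^p\mathcal F v_+$ with $\mathcal F v_+\in L^2$ gives, via Hausdorff--Young and interpolation, $F(\mathcal F v_+)\in L^{2/(p+1)}(e_j)\subset L^{(p+2)/(p+1)}(e_j)$ locally — and the decay of $\mathcal F\varphi$ handles the tail), so no integrability issue actually arises.
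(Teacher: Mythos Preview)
Your two–step approximation $v_+\rightsquigarrow\tilde v_+\rightsquigarrow\mathcal F\varphi$ has a genuine circularity that you do not close. To absorb the error term $\tbra{F(\mathcal F v_+)-F(\mathcal F\tilde v_+)}{\mathcal F\varphi}_j$, the only H\"older splitting compatible with the sole hypothesis $\mathcal F v_+\in L^2(e_j)$ is the $L^{2/(p+1)}\times L^{2/(1-p)}$ pairing, giving a bound of the form $C\,\|v_+-\tilde v_+\|_{L^2}\,\|\mathcal F\varphi\|_{L^{2/(1-p)}}$. But $\|\mathcal F\varphi\|_{L^{2/(1-p)}}$ is essentially $\|\mathcal F\tilde v_+\|_{L^{2/(1-p)}}$, a quantity that is determined only \emph{after} $\tilde v_+$ is fixed and that may diverge as $\tilde v_+\to v_+$ in $L^2$ (nothing prevents $\mathcal F v_+\notin L^{2/(1-p)}$). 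So you need $\|v_+-\tilde v_+\|_{L^2}$ small compared to $\delta(\tilde v_+)/\|\mathcal F\tilde v_+\|_{L^{2/(1-p)}}$, a constraint on $\tilde v_+$ that you never verify can be met. Your own remark that one is ``forced'' to have $v_+$ and $\tilde v_+$ close in ``$L^2\cap L^\infty$–type norms'' already signals the problem: only $L^2$ approximation of $v_+$ is available.

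The paper sidesteps this entirely by not approximating $\mathcal F v_+$. Since $\mathcal F v_+\in L^2$, one has $F(\mathcal F v_+)\in L^{2/(p+1)}$ automatically; it therefore suffices to produce a single target $h\in L^{2/(1-p)}$ with $\re\tbra{F(\mathcal F v_+)}{h}_j<0$ and then approximate $h$ by $\mathcal F\varphi$ in $L^{2/(1-p)}$ via Lemma~\ref{rmkA.3}. The paper's choice is $h=-\lambda(\re\mathcal F v_+\vee 0)^{1-p}$: raising an $L^2$ function to the power $1-p$ lands exactly in $L^{2/(1-p)}$, so $h$ is in the right space \emph{without} any extra assumption on $v_+$, and the pairing $\int|\mathcal F v_+|^p(\re\mathcal F v_+)(\re\mathcal F v_+\vee 0)^{1-p}$ is manifestly nonnegative and strictly positive on a set of positive measure. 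There is then only one approximation step and no circularity. If you want to salvage your route, drop the intermediate $\tilde v_+$ and argue directly by duality as above.
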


\begin{proof}
Since $v_+ \neq0$, we may assume that $\re \mathcal{F} v_+>0$ on a set with non-zero measure for simplicity. 
Let $f\vee 0= \max\{f,0\}$.  
By Lemma \ref{rmkA.3}, there exists a real-valued function $\varphi \in \{f \in C_{0}^{\infty}([0,\infty)): f(0)=0\}$ such that $\| (\re \mathcal{F} v_+\vee 0)^{1-p} + \lambda \mathcal{F} \varphi \|_{L^{\frac{2}{1-p}}(e_j)} < \delta \| \re \mathcal{F} v_+ \|_{L^2}^{-(p+1)}$, where $\delta>0$ is defined later, noting that $\re \mathcal{F} v_+(0)=0$ by the definition of the Fourier-Sine transformation and $(\re \mathcal{F} v_+ \vee 0)^{1-p} \in L^{\frac{2}{1-p}}(e_j)$ for $0<p<1$. Then, we obtain
\begin{align*}
	\re \tbra{F(\mathcal{F} v_+)}{\mathcal{F}\varphi}_{j}
	&=- \int_{0}^{\infty} |\mathcal{F} v_+|^p (\re \mathcal{F} v_+)(\re \mathcal{F} v_+\vee 0)^{1-p}  dx 
	\\
	&\quad+ \int_{0}^{\infty} |\mathcal{F} v_+|^p (\re \mathcal{F} v_+)( (\re \mathcal{F} v_+\vee 0)^{1-p}  +\lambda\mathcal{F} \varphi) dx
	\\
	&\leq  -2\delta + \left\|(\re \mathcal{F} v_+\vee 0)^{1-p}  + \lambda\mathcal{F} \varphi \right\|_{L^{\frac{2}{1-p}}(e_j)} \norm{\mathcal{F} v_+ }_{L^2}^{p+1}
	\\
	&\leq -2\delta + \delta = -\delta,
\end{align*}
where $\delta :=\frac{1}{2} \int_{0}^{\infty} |\mathcal{F} v_+|^p (\re \mathcal{F} v_+)(\re \mathcal{F} v_+\vee 0)^{1-p}  dx  >0$ by the assumption.
Thus we get the statement. 
\end{proof}

\begin{proof}[Proof of Lemma {\ref{lem2.2}}]
We suppose that $v_+ \neq 0$ and take $\varphi$ as in Lemma \ref{lem3.5}. 
Integrating \eqref{eq3.1} on $[1,\tau]$ and taking the real part, by Lemma \ref{lem3.2}, Lemma \ref{cor3.4}, and Lemma \ref{lem3.5},  we have
\begin{align*}
	\int_{1}^{\tau} \re (i \partial_t \tbra{u}{w}_{j}) dt 
	&\cgeq -\tau^{1/2} + \int_{1}^{\tau} t^{-\frac{p}{2}} \re \tbra{F(\mathcal{F} v_+)}{\mathcal{F}\varphi}_{j} dt -C-\frac{\delta}{2}\tau^{1-\frac{p}{2}}
	\\
	&\geq -\tau^{1/2}+\frac{\delta}{2}\tau^{1-\frac{p}{2}}-C.
\end{align*}
Since $p<1$, the left hand side tends to infinity as $\tau \to \infty$. On the other hand, we have $\text{(L.H.S)} \cleq \norm{u_0}_{L^2(\mathcal{G})} \norm{\varphi}_{L^2(e_j)}$ by $L^2$-conservation law. This is a contradiction. 
\end{proof}


\subsection{Linear asymptotics}
\label{sec3.2} 

We will use the following famous lemma. 
\begin{lemma}[Kato--Kuroda--Birman (see e.g. {\cite[Theorem XI.9]{ReSiIII}})]
\label{lem3.1.1}
Let $\mathcal{H}$ be a Hilbert space and $A,B$ are self-adjoint operators on $\mathcal{H}$. If $(A+i)^{-1} - (B+i)^{-1}$ is a trace class operator, then $\Omega_{+} := \slim_{t \to \infty} e^{itA} e^{-itB} P_{ac}(B)$ exists. Namely, for any $u_0 \in \mathcal{H}$, there exists $u_{+} \in \mathcal{H}$ such that 
\begin{align*}
	\norm{e^{-itB}P_{ac}(B) u_0 - e^{-itA} u_+}_{\mathcal{H}} \to 0 \text{ as } t \to \infty. 
\end{align*}
\end{lemma}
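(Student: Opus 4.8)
The plan is to prove existence of $\Omega_+$ in two stages: first reduce the unbounded hypothesis on the resolvent difference to a trace-class difference of \emph{bounded} self-adjoint operators, and then establish existence of the wave operator in this bounded setting by the Kato--Rosenblum argument; the bridge between the two stages is the invariance principle for wave operators. Since the statement asserts only existence and not completeness of $\Omega_+$, I can dispense with the symmetric range/intertwining argument and concentrate on a single Cauchy estimate as $t\to\infty$.

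For the reduction, set $R_A=(A+i)^{-1}$ and $R_B=(B+i)^{-1}$, which are bounded and normal. By hypothesis $R_A-R_B\in\fS_1$, and taking adjoints also $(A-i)^{-1}-(B-i)^{-1}\in\fS_1$; adding the two shows that the self-adjoint real part $\re R_A-\re R_B=\frac{A}{A^2+1}-\frac{B}{B^2+1}$ lies in $\fS_1$ as well. I would then work with the bounded self-adjoint operators $\tilde A:=\re R_A$ and $\tilde B:=\re R_B$, whose difference is trace class. Because $\lambda\mapsto(\lambda+i)^{-1}$ maps $\R\cup\{\infty\}$ homeomorphically onto a circle, the absolutely continuous subspaces of $B$ and of $\tilde B$ coincide and $P_{ac}$ is preserved, so the invariance principle guarantees that existence of $\slim_{t\to\infty}e^{it\tilde A}e^{-it\tilde B}P_{ac}(\tilde B)$ forces existence of $\Omega_+=\slim_{t\to\infty}e^{itA}e^{-itB}P_{ac}(B)$, the two limits carrying the same information. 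The one subtlety here is that $\lambda\mapsto\re(\lambda+i)^{-1}$ is not injective, so I must either split $\cH$ along the spectral projections $E_A(\R_{\ge0})$ and $E_A(\R_{<0})$, on which the function is strictly monotone, or invoke the form of the invariance principle valid for the full admissible function $\lambda\mapsto(\lambda+i)^{-1}$.

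The heart is the bounded case. Writing the self-adjoint trace-class operator $V:=\tilde A-\tilde B=\sum_k s_k\tbra{\cdot}{e_k}e_k$ with $\sum_k|s_k|<\infty$, I note that the naive Cook criterion $\int_1^\infty\norm{Ve^{-it\tilde B}u}\,dt<\infty$ is \emph{not} available, since a trace-class perturbation need not be $\tilde B$-smooth. Instead I would use the harmonic-analysis (Kato-smoothness) estimate on the absolutely continuous subspace: for $u$ whose spectral measure $d\tbra{E_{\tilde B}(\lambda)u}{u}=g(\lambda)\,d\lambda$ has bounded density --- a dense class in $\Ran P_{ac}(\tilde B)$ --- the scalar correlation $t\mapsto\tbra{e^{-it\tilde B}u}{e_k}$ is the Fourier transform of an $L^2$ density, so $\int_\R|\tbra{e^{-it\tilde B}u}{e_k}|^2\,dt<\infty$ with a bound governed by that density. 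Then, pairing the increment $e^{it_2\tilde A}e^{-it_2\tilde B}u-e^{it_1\tilde A}e^{-it_1\tilde B}u=i\int_{t_1}^{t_2}e^{it\tilde A}Ve^{-it\tilde B}u\,dt$ against test vectors from the same good dense class, inserting the spectral sum for $V$, and applying Cauchy--Schwarz in $t$ together with $\sum_k|s_k|<\infty$, yields a bound in which the $t_1$-tails of the $L^2$-in-time integrals tend to $0$. This gives the norm-Cauchy property on the dense class, and hence existence of the limit on all of $\Ran P_{ac}(\tilde B)$ by the uniform bound $\norm{e^{it\tilde A}e^{-it\tilde B}}\le1$.

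The main obstacle is precisely the failure of the direct Cook estimate: the content of the theorem is that a \emph{merely} trace-class (and not $\tilde B$-smooth) perturbation still produces a convergent wave operator, and the mechanism is the square-integrability in time of the off-diagonal correlations $\tbra{e^{-it\tilde B}u}{e_k}$ on the absolutely continuous subspace, combined with the $\ell^1$ summability of the singular values so that the rank-one contributions assemble without loss. I expect the delicate points to be making the dense class of $u$ with $L^2$ spectral density explicit and stable under the pairing argument, and controlling the turning-point/non-injectivity issue in the invariance-principle reduction; neither is conceptually hard, but both demand care.
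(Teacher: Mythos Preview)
The paper does not prove this lemma at all: it is stated as a well-known result with a reference to \cite[Theorem XI.9]{ReSiIII} and then simply applied in Section~\ref{sec3.2} to conclude Lemma~\ref{lem2.3}. So there is nothing to compare your proposal against in the paper itself.

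That said, your outline is a faithful sketch of the standard Birman--Kato--Kuroda argument as presented, for instance, in Reed--Simon~III. The two-stage structure (reduce to bounded self-adjoint operators via a function of the resolvent, then invoke Kato--Rosenblum in the bounded trace-class setting) is exactly right, and you have correctly identified the two genuine technical points: (i) the non-monotonicity of $\lambda\mapsto\lambda/(1+\lambda^2)$, which forces either a spectral splitting or an appeal to the invariance principle for a more carefully chosen admissible function; and (ii) the fact that Cook's method fails and must be replaced by the $L^2$-in-time bound on $\tbra{e^{-it\tilde B}u}{e_k}$ for $u$ with bounded spectral density, summed against the $\ell^1$ singular values of $V$. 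Both are handled in the reference, and your description of them is accurate. As a proof \emph{plan} it is sound; turning it into a complete proof would require writing out the Pearson/Rosenblum Cauchy estimate carefully and pinning down the invariance-principle step, but there is no conceptual gap.
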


The resolvent formula is known for the Laplacian on the star graph by \cite{KoSc06} as follows.
\begin{lemma}[{\cite[Lemma 4.2]{KoSc06}}]
Let $\lambda^2 \in \C \setminus \R$ and $\im \lambda >0$. Then $(-\Delta_M - \lambda^2)^{-1} \in B(\mathcal{H})$ is given by 
\begin{align*}
	(-\Delta_M - \lambda^2)^{-1} u(x) = \int_{\mathcal{G}} r(x,y,\lambda) u(y)dy,
	\\
	r(x,y,\lambda)= r^{0}(x,y,\lambda) + \frac{i}{2\lambda} \phi(x)G(M)\phi(y),
\end{align*}
where 
$r^{0}(x,y,\lambda):=\frac{i}{2\lambda} \diag (e^{i\lambda|x_j -y_j|})_{j=1}^{n}$, $\phi(x):=\diag(e^{i\lambda x_j})_{j=1}^{n}$, $G(M):=-(A+i\lambda B)^{-1}(A-i\lambda B)$.
\end{lemma}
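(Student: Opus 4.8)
The asserted identity is an instance of Krein's resolvent formula, and the plan is to verify it directly. First I would check that $A+i\lambda B$ is invertible whenever $\lambda^{2}\in\C\setminus\R$ with $\im\lambda>0$, so that $G(M)$ and the kernel $r(x,y,\lambda)$ are well defined: if $(A+i\lambda B)\xi=0$ with $\xi\neq 0$, then $g(x):=\phi(x)\xi$ (that is, $g_{j}(x_{j})=e^{i\lambda x_{j}}\xi_{j}$) lies in $L^{2}(\mathcal{G})$ because $\im\lambda>0$, satisfies $g_{j}''=-\lambda^{2}g_{j}\in L^{2}(e_{j})$, and obeys $Ag(0)+Bg'(0+)=(A+i\lambda B)\xi=0$; hence $g\in\mathscr{D}(\Delta_{M})$ with $-\Delta_{M}g=\lambda^{2}g$, which would make $\lambda^{2}$ a real eigenvalue of the self-adjoint operator $-\Delta_{M}$, a contradiction.

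Next, set $Ru(x):=\int_{\mathcal{G}}r(x,y,\lambda)u(y)\,dy$ for $u\in L^{2}(\mathcal{G})$. I would record that $R\in B(L^{2}(\mathcal{G}))$: the diagonal part built from $r^{0}$ is an integral operator on each edge to which Schur's test applies (its kernel has $L^{1}$-norm in $x$, and in $y$, bounded by $(|\lambda|\,\im\lambda)^{-1}$, since $\im\lambda>0$), while the remaining finite-rank part is bounded because $e^{i\lambda\,\cdot}\in L^{2}(e_{j})$, so that $\widehat{u}:=\int_{\mathcal{G}}\phi(y)u(y)\,dy\in\C^{n}$ is well defined with $|\widehat{u}|\lesssim\|u\|_{L^{2}(\mathcal{G})}$. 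Then, writing $v:=Ru$, the explicit kernel shows that on each edge $v_{j}$ is a sum of $e^{\pm i\lambda x_{j}}$ times locally absolutely continuous functions plus a smooth term, so $v_{j},v_{j}'$ are absolutely continuous; and from the distributional identities $-\partial_{x}^{2}r^{0}(\cdot,y,\lambda)=\lambda^{2}r^{0}(\cdot,y,\lambda)+\delta(\cdot-y)$ and $-\partial_{x}^{2}e^{i\lambda x}=\lambda^{2}e^{i\lambda x}$ one gets $v_{j}''\in L^{2}(e_{j})$ together with $-v_{j}''-\lambda^{2}v_{j}=u_{j}$ on $e_{j}$. Thus $v\in D(\mathcal{G})$ and $(-\Delta_{M}-\lambda^{2})v=u$ componentwise.

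The heart of the matter is the boundary condition. Using $\phi(0)=I$ and $\phi'(0+)=i\lambda I$ in the explicit formula, I would compute the boundary data
\begin{align*}
 v(0)=\tfrac{i}{2\lambda}\bigl(I+G(M)\bigr)\widehat{u},\qquad v'(0+)=\tfrac12\bigl(I-G(M)\bigr)\widehat{u},
\end{align*}
whence $2\lambda\bigl(Av(0)+Bv'(0+)\bigr)=i\bigl[(A-i\lambda B)+(A+i\lambda B)G(M)\bigr]\widehat{u}$, which vanishes precisely because $G(M)=-(A+i\lambda B)^{-1}(A-i\lambda B)$. Hence $v\in\mathscr{D}(\Delta_{M})$, and since $\lambda^{2}$ lies in the resolvent set of the self-adjoint operator $-\Delta_{M}$, the two facts $Ru\in\mathscr{D}(\Delta_{M})$ and $(-\Delta_{M}-\lambda^{2})Ru=u$ identify $R$ with $(-\Delta_{M}-\lambda^{2})^{-1}$. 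I expect the only non-routine point to be the invertibility of $A+i\lambda B$ in the first step; everything else is bookkeeping with the explicit kernel, and the algebraic identity $(A-i\lambda B)+(A+i\lambda B)G(M)=0$ in the last step is exactly what motivates the definition of $G(M)$ and is immediate once the boundary data are in hand.
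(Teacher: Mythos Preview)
The paper does not prove this lemma at all; it is quoted verbatim from \cite[Lemma 4.2]{KoSc06} and used as a black box to show that the difference of resolvents $(-\Delta_D - i)^{-1} - (-\Delta_M - i)^{-1}$ is finite rank. So there is no proof in the paper to compare against.

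Your direct verification is correct and self-contained. The invertibility argument for $A+i\lambda B$ via the would-be $L^2$ eigenfunction $\phi(x)\xi$ is the standard one; the Schur estimate for the free diagonal kernel and the finite-rank bound for the correction are routine; and your boundary computation is right: with $\widehat{u}=\int_{\mathcal{G}}\phi(y)u(y)\,dy$ one indeed gets $v(0)=\tfrac{i}{2\lambda}(I+G(M))\widehat{u}$, $v'(0+)=\tfrac12(I-G(M))\widehat{u}$, and then $2\lambda(Av(0)+Bv'(0+))=i[(A-i\lambda B)+(A+i\lambda B)G(M)]\widehat{u}=0$ by the definition of $G(M)$. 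Since $\lambda^2$ is in the resolvent set, the identity $R=(-\Delta_M-\lambda^2)^{-1}$ follows. There is nothing missing; your write-up supplies a proof where the paper simply cites one.
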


Letting $G_{kl}$ be $(k,l)$-component of $G(D)-G(M)$ and $\phi_l$ be $(l,l)$-component of $\phi$ and setting $\varphi_k:=(G_{kl} \phi_{l})_{1\leq l \leq n}$, we have 
\begin{align*}
	[(-\Delta_{D} - \lambda^2)^{-1} - (-\Delta_{M} -\lambda^2)^{-1} ] u
	&=\frac{i}{2\lambda}\int_{\mathcal{G}}  \phi(x) (G(D)-G(M)) \phi(y) u(y)dy
	\\
	&=\frac{i}{2\lambda}  \phi(x) \int_{\mathcal{G}} \left(\sum_{l=1}^{n} G_{kl} \phi_l(y_l) u_l(y_l) \right)_{1\leq k \leq n} dy
	\\
	&=\frac{i}{2\lambda} \phi(x) \left( \sum_{l=1}^{n} \tbra{u_l}{G_{kl}\phi_{l}}_{l} \right)_{1\leq k \leq n} 
	\\
	&=\frac{i}{2\lambda}  \sum_{l=1}^{n}  \left(\tbra{u_l}{G_{kl}\phi_{l}}_{l} \phi_k(x_k) \right)_{1\leq k \leq n} 
	\\
	&=\frac{i}{2\lambda}   \left( \tbra{u}{\varphi_k} \phi_k(x_k) \right)_{1\leq k \leq n}.
\end{align*}
This means that $(-\Delta_{D} - i)^{-1} - (-\Delta_{M} -i)^{-1}$ (as $\lambda=(1+i)/\sqrt{2}$) is finite rank. And thus, it also belongs to the trace class. Applying Lemma \ref{lem3.1.1} as $\mathcal{H}=L^2(\mathcal{G})$, we complete the proof of Lemma \ref{lem2.3}.


\subsection{Proof of main result}
\label{sec3.3}

We give the proof of main result. 

\begin{proof}[Proof of Theorem {\ref{thm2.1}}]
By Lemma \ref{lem2.3}, 
\begin{align*}
	\norm{u(t) - (e^{it\Delta_{K}} u_{+}+l(t))}_{L^2(\mathcal{G})} \to 0 \quad (t \to \infty)
\end{align*}
implies that there exists $v_{+} \in L^2(\mathcal{G})$ such that
\begin{align*}
	\norm{u(t) - (e^{it\Delta_{D}} v_+ + l(t))}_{L^2(e_j)} \to 0 \quad (t \to \infty)
\end{align*}
for any $j =1,...,n$. 
By Lemma \ref{lem2.2}, we get $v_+\equiv 0$ on $\mathcal{G}$. This means that $\norm{e^{it\Delta_{K}} v_{+}}_{L^2(\mathcal{G})} \to 0$ from Lemma \ref{lem2.2}. However, the $L^2$-conservation law shows $ v_{+}\equiv 0$. 
\end{proof}


\appendix
\section{Weak solution}
In the appendix, we discuss that an $L^2$-solution is a weak solution. 
We say that a pair $(q,r)$ is admissible if $2/q = 1/2-1/r$ and $2\leq q,r \leq \infty$. 
We define the $L^2$-solution (or the Strichartz class solution) as follows.
\begin{definition}
Let $I$ be a time interval containing $0$. 
We say that $u$ is an $L^2$-solution (or a Strichartz class solution) to \eqref{NLS} on $I$ if $u \in C(I:L^2(\mathcal{G})) \cap \bigcap_{(q,r):\text{admissible}} L^q(I:L^r(\mathcal{G}))$ and $u$ satisfies
\begin{align*}
	u(t)=e^{it\Delta_{M}} u_0 + i \lambda \int_{0}^{t} e^{i(t-s)\Delta_{M}} (|u|^p u)(s) ds
\end{align*}
for all $t \in I$ and almost all $x \in \mathcal{G}$.
\end{definition}

A unique $L^2$-solution to \eqref{NLS} exists if $u_0 \in L^2(\mathcal{G})$ by Grecu and Ignat \cite{GrIg19}. They also showed $L^2$-conservation law, i.e., $\norm{u(t)}_{L^2(\mathcal{G})}=\norm{u_0}_{L^2(\mathcal{G})}$, and thus the solution is global. 
The $L^2$-solution is a weak solution in the following sense. 

\begin{lemma}
\label{lemA.1}
If $u$ is an $L^2$-solution and $\varphi \in \{f \in H^2((0,\infty)) \cap C([0,\infty)) : f(0)=0\}$, we have
\begin{align}
\label{eqA.1}
	i \tbra{u(\tau)}{\varphi}_{j} - i \tbra{u_0}{\varphi}_{j} + \int_{0}^{\tau} \tbra{u}{\varphi''}_{j} dt + \int_{0}^{\tau} u_j(t,0+) \overline{\partial_x \varphi (0+)}  dt 
	\\=-\lambda \int_{0}^{\tau} \tbra{|u|^p u}{\varphi}_{j} dt.
\notag
\end{align}
\end{lemma}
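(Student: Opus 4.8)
The statement is the weak formulation \eqref{eqA.1} for an $L^2$-solution tested against $\varphi$ vanishing at the vertex. The plan is to start from the Duhamel formula defining the $L^2$-solution,
\begin{align*}
	u(t) = e^{it\Delta_M} u_0 + i\lambda \int_0^t e^{i(t-s)\Delta_M}(|u|^p u)(s)\, ds,
\end{align*}
and pair it with $\varphi$ in $L^2(e_j)$. The core identity I need is that for $g \in L^2(\mathcal{G})$ and $\varphi$ as in the hypothesis,
\begin{align*}
	\frac{d}{dt}\tbra{e^{it\Delta_M} g}{\varphi}_j = \tbra{i\Delta_M e^{it\Delta_M} g}{\varphi}_j = -i\tbra{e^{it\Delta_M} g}{\varphi''}_j - i\, (e^{it\Delta_M}g)_j(t,0+)\,\overline{\partial_x\varphi(0+)},
\end{align*}
i.e. an integration-by-parts formula on each half-line. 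Twice integrating by parts on $e_j=[0,\infty)$ produces boundary terms at $0$ and at $+\infty$; the terms at $+\infty$ vanish by decay/density, the term $\overline{\varphi}\,\partial_x(\cdot)$ at $0$ would involve $\partial_x u(t,0+)$ but is killed because $\varphi(0)=0$, and the surviving term is exactly $(\cdot)(t,0+)\overline{\partial_x\varphi(0+)}$. This is the half-line analogue of the weak formula sketched in Section~\ref{sec2.3}, and it is where the Dirichlet-type condition $\varphi(0)=0$ is essential.

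The main steps, in order: (i) prove the integration-by-parts identity above first for $g$ in a dense convenient class — say $g \in \mathscr{D}(\Delta_M)$, or better $g$ smooth, compactly supported, and vanishing near the vertex, so all integrations by parts and the evaluation of traces are literally justified — then pass to general $g \in L^2(\mathcal{G})$ by density, using that $e^{it\Delta_M}$ is unitary and that the trace map $h \mapsto h_j(\cdot,0+)$ is controlled by Strichartz-type bounds (the $L^4_t L^\infty_x$ estimate already invoked in the proof of Lemma~\ref{lem3.3}, via \cite{GrIg19}); (ii) integrate this identity in $t$ over $[0,\tau]$ to obtain \eqref{eqA.1} for the linear flow $u = e^{it\Delta_M}u_0$; (iii) apply the same computation to each Duhamel slice $s \mapsto e^{i(t-s)\Delta_M}(|u|^p u)(s)$ — noting $|u|^p u \in L^{q'}_t L^{r'}_x$ for suitable admissible $(q,r)$ by Hölder and the fact that $u$ lies in every admissible Strichartz space, so the source term lies in $L^1_{loc}(L^2)$ and the traces make sense — and use Fubini to exchange the $s$- and $t$-integrations; (iv) add the linear and Duhamel contributions, and observe that the extra $\lambda$-term on the right of \eqref{eqA.1} arises precisely from the $s=t$ endpoint of the Duhamel integral (the derivative of $\int_0^t$ in $t$), while the remaining terms reassemble into the claimed identity.

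The main obstacle is step (i): giving rigorous meaning to the boundary trace $u_j(t,0+)$ and to $(e^{it\Delta_M}g)_j(t,0+)$ for merely $L^2$ data, and justifying the integration by parts without assuming $u(t) \in \mathscr{D}(\Delta_M)$ (which is false for general $L^2$ initial data). I expect to handle this by working throughout with the Strichartz class: $u \in \bigcap_{(q,r)} L^q(I;L^r(\mathcal{G}))$ implies, after a Sobolev/trace estimate on each half-line edge (e.g. $\|h_j(\cdot,0+)\|_{L^4_t} \lesssim \|h\|_{L^4_t W^{1,1}_x} \lesssim \|h\|_{\text{Strichartz}}$, or directly via the fractional-integration bound used for $e^{it\Delta_D}$), that the boundary trace is a well-defined $L^{4/3}_{loc}$ (or $L^2_{loc}$) function of $t$, and the pairing $\int_0^\tau u_j(t,0+)\overline{\partial_x\varphi(0+)}\,dt$ is absolutely convergent. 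Once the traces are controlled in this way, the density argument closes: approximate $u_0$ by nice data, pass to the limit in each term of \eqref{eqA.1} using unitarity for the linear piece and Strichartz stability for the nonlinear piece, and conclude. The second, milder difficulty is bookkeeping the Fubini exchange and the endpoint term in step (iv), but this is routine once the integrands are known to be absolutely integrable.
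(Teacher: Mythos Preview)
Your plan is correct in outline, but the paper takes a different and somewhat shorter route. Instead of decomposing $u$ via Duhamel and handling the linear and nonlinear pieces separately by density in the data, the paper regularizes the solution itself: with $H=-\Delta_M$ it sets $u_\varepsilon:=(I+\varepsilon^2 H)^{-1}u$, so that $u_\varepsilon\in C(I;\mathscr{D}(H))\cap W^{1,1}(I;L^2(\mathcal{G}))$ and solves the approximate equation $i\partial_t u_\varepsilon = H u_\varepsilon - (I+\varepsilon^2 H)^{-1}F(u)$ exactly (this is the device from \cite{GrIg19}). Since $u_\varepsilon(t)\in\mathscr{D}(\Delta_M)\subset H^2(\mathcal{G})$ for every $t$, the two integrations by parts on $e_j$ and the evaluation of the boundary trace $u_{\varepsilon,j}(t,0+)$ are literally justified; one then integrates in $t$ and sends $\varepsilon\to 0$. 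Compared to your approach, the resolvent regularization buys economy: a single approximation handles both the linear and Duhamel contributions at once, and there is no need for the Fubini exchange or the $s=t$ endpoint bookkeeping in your step~(iv). Your route, on the other hand, makes the role of the Strichartz trace control more transparent and is closer in spirit to how one would argue if no convenient operator-theoretic regularization were available; the two approaches meet at the same point, namely the need to justify the limit in the boundary term $\int_0^\tau u_j(t,0+)\overline{\partial_x\varphi(0+)}\,dt$, which in both cases ultimately rests on the $L^4_tL^\infty_x$ Strichartz bound from \cite{GrIg19}.
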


\begin{proof}
Let $H=-\Delta_{M}$. We define $u_\varepsilon:=(I+\varepsilon^2 H )^{-1}u$, where $u$ is the $L^2$-solution. Then, from the argument in \cite{GrIg19}, it holds that $u_{\varepsilon} \in C(I:\mathscr{D}(H)) \cap W^{1,1,}(I:L^2(\mathcal{G}))$ and 
\begin{align*}
\begin{cases}
	i \partial_t u_{\varepsilon} = Hu_{\varepsilon} - (I+\varepsilon^2 H)^{-1} F(u),
	\\
	u_{\varepsilon}(0)=(I+\varepsilon^2 H )^{-1}u_{0},
\end{cases}
\end{align*}
where $F(u)=\lambda |u|^p u$ and $(I+\varepsilon^2 H )^{-1}F(u) \in L^1(I:\mathscr{D}(H))$. Multiplying the equation by the complex conjugate of $\varphi \in  \{f \in H^2((0,\infty)) \cap C([0,\infty)) : f(0)=0\}$ and integrating on an edge $e_j$, it follows from the integration by parts that
\begin{align*}
	i \tbra{\partial_t u_{\varepsilon}}{\varphi}_{j}  =- \tbra{u_{\varepsilon}}{\varphi''}_{j} +u_{\varepsilon j}(t,0+) \overline{\partial_x \varphi (0+)} - \tbra{(I+\varepsilon^2 H)^{-1} F(u)}{\varphi}_{j}.
\end{align*}
Integrating this on $[0,\tau)$ for $\tau \in I$, we obtain
\begin{align*}
	i \tbra{u_{\varepsilon}(\tau)}{\varphi}_{j} - \tbra{u_{\varepsilon}(0)}{\varphi}_{j}   
	&= -\int_{0}^{\tau}  \tbra{u_{\varepsilon}}{\varphi''}_{j}dt
	 +\int_{0}^{\tau}  u_{\varepsilon j}(t,0+) \overline{\partial_x \varphi (0+)} dt 
	\\
	&\quad- \int_{0}^{\tau}  \tbra{(I+\varepsilon^2 H)^{-1} F(u)}{\varphi}_{j} dt.
\end{align*}
By the argument in \cite{GrIg19}, taking $\varepsilon\to 0$, we have \eqref{eqA.1}.
\end{proof}

From this lemma, we get the following lemma, which is one of keys to show Theorem \ref{thm2.1} (see Section \ref{sec2.3}). 

\begin{lemma}
Let $u$ be an $L^2$-solution and $w=e^{it\Delta_{D}}\varphi$, where $\varphi \in \{f \in H^2(\mathcal{G}) : f(0)=0\}$. Then we have
\begin{align*}
	i \tbra{u(\tau)}{w(\tau)}_{j} - i \tbra{u_0}{\varphi}_{j}+ \int_{0}^{\tau} u_j(t,0+) \overline{\partial_x w (t,0+)}  dt =-\lambda \int_{0}^{\tau} \tbra{|u|^p u}{w}_{j} dt.
\end{align*}
\end{lemma}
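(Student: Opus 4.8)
The plan is to deduce the asserted weak formulation for the time-dependent test function $w=e^{it\Delta_D}\varphi$ from Lemma~\ref{lemA.1}, which already gives the weak formula for a fixed (time-independent) test function $\varphi \in \{f \in H^2((0,\infty))\cap C([0,\infty)) : f(0)=0\}$. The basic idea is that $w(t)=e^{it\Delta_D}\varphi$ solves the free Dirichlet Schr\"odinger equation $i\partial_t w + \Delta_D w = 0$ with $w(t,0+)=0$ for all $t$, so pairing $u$ against $w$ and differentiating in time should make the ``bad'' $\partial_x u$-boundary term disappear and leave only the $u_j(t,0+)\overline{\partial_x w_j(t,0+)}$ term, exactly as in the heuristic derivation in Section~\ref{sec2.3}.

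Concretely, first I would fix $t$ and apply Lemma~\ref{lemA.1} with the test function $\varphi$ replaced by $w(t)=e^{it\Delta_D}\varphi$; this is legitimate since for $\varphi \in H^2(\mathcal{G})$ with $\varphi(0)=0$ the Dirichlet flow preserves the class $\{f \in H^2 : f(0)=0\}$ (indeed $\mathscr D(\Delta_D)\subset H^2$ and $e^{it\Delta_D}$ preserves $\mathscr D(\Delta_D)$; for general such $\varphi$ one approximates by elements of $\mathscr D(\Delta_D)$), and $w_j(t,0+)=0$. This yields, for each fixed $t$,
\begin{align*}
i\tbra{u(\tau)}{w(t)}_j - i\tbra{u_0}{w(t)}_j + \int_0^\tau \tbra{u}{\partial_x^2 w(t)}_j\,ds + \int_0^\tau u_j(s,0+)\overline{\partial_x w_j(t,0+)}\,ds = -\lambda\int_0^\tau \tbra{|u|^pu}{w(t)}_j\,ds.
\end{align*}
Next I would use the relation $\partial_x^2 w(t) = \Delta_D w(t) = -i\partial_t w(t)$, so that $\tbra{u}{\partial_x^2 w(t)}_j = -i\tbra{u}{\partial_t w(t)}_j = \overline{i\partial_t \tbra{w(t)}{u}_j}$-type expressions appear; the point is that the term $\int_0^\tau \tbra{u(s)}{\partial_x^2 w(t)}_j\,ds$, once $t$ is set equal to the running variable, combines with the time derivative of $\tbra{u}{w}_j$. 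The cleanest route is to differentiate the identity in $\tau$: evaluating at generic $\tau$, replacing $t$ by $\tau$, and writing $\frac{d}{d\tau}\tbra{u(\tau)}{w(\tau)}_j = \tbra{\partial_\tau u(\tau)}{w(\tau)}_j + \tbra{u(\tau)}{\partial_\tau w(\tau)}_j$, one sees the $\tbra{u}{\Delta_D w}_j$ contribution is exactly what is needed to cancel the $\tbra{u}{\partial_x^2 w}_j$ integrand; integrating back from $0$ to $\tau$ converts $\tbra{u_0}{w(0)}_j = \tbra{u_0}{\varphi}_j$ and leaves the stated formula. I would justify the differentiation/integration manipulations by the same regularization $u_\varepsilon = (I+\varepsilon^2 H)^{-1}u$ used in the proof of Lemma~\ref{lemA.1}, since $u_\varepsilon$ is genuinely $C^1$ in time with values in $\mathscr D(H)$ and $w$ is smooth enough, then pass $\varepsilon \to 0$ using the continuity of $u$ in $L^2$ and the Strichartz bounds on $|u|^pu$.

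The main obstacle will be the careful bookkeeping of the two boundary contributions and making sure the $\partial_x u(t,0+)$ term really drops out: in the integration by parts on $e_j=[0,\infty)$ one gets $\tbra{u}{\partial_x^2 w}_j - \tbra{\partial_x^2 u}{w}_j = -[u_j\overline{\partial_x w_j} - \partial_x u_j\,\overline{w_j}]_{x=0+}$ (the boundary-at-infinity terms vanishing by decay), and it is the vanishing of $w_j(t,0+)$ — i.e. the Dirichlet condition on the test function — that kills the $\partial_x u_j(t,0+)\overline{w_j(t,0+)}$ term. One must check that the regularized $u_\varepsilon$ has enough boundary regularity for this to be rigorous and that the limit $\varepsilon\to 0$ of the surviving boundary term $\int_0^\tau u_{\varepsilon,j}(t,0+)\overline{\partial_x w_j(t,0+)}\,dt$ converges to $\int_0^\tau u_j(t,0+)\overline{\partial_x w_j(t,0+)}\,dt$; this is precisely the type of convergence already invoked in Lemma~\ref{lemA.1}'s proof, so I would cite that. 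Everything else — the $L^2$ pairings, the Strichartz-integrability of $|u|^pu$, and the smoothness of $w=e^{it\Delta_D}\varphi$ in both variables — is routine.
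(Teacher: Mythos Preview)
Your proposal is correct in the end, and the mechanism you invoke---regularize via $u_\varepsilon=(I+\varepsilon^2 H)^{-1}u$, use that $w$ solves $i\partial_t w+\Delta_D w=0$ with $w_j(t,0+)=0$, combine the time derivatives by the product rule, integrate, and pass $\varepsilon\to 0$---is exactly the paper's proof. The difference is only in organization: the paper does not attempt your opening detour of applying Lemma~\ref{lemA.1} with a \emph{frozen} test function $w(t)$. That step gives an identity with $w(t)$ appearing inside the $ds$-integrals rather than $w(s)$, and your subsequent ``set $t$ equal to the running variable and differentiate'' is not a clean operation on that integrated identity; you yourself note that it must be redone at the level of $u_\varepsilon$. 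The paper simply starts there: it multiplies the equation for $u_\varepsilon$ by $\overline{w(t)}$, integrates by parts on $e_j$ (the Dirichlet condition on $w$ killing the $\partial_x u_{\varepsilon,j}(t,0+)\overline{w_j(t,0+)}$ term), substitutes $w''=-i\partial_t w$ to obtain $i\partial_t\tbra{u_\varepsilon}{w}_j=u_{\varepsilon,j}(t,0+)\overline{\partial_x w_j(t,0+)}-\tbra{(I+\varepsilon^2 H)^{-1}F(u)}{w}_j$ in one line, integrates on $[0,\tau)$, and lets $\varepsilon\to0$. Dropping your frozen-$t$ preamble and going straight to this computation would make your write-up match the paper's and avoid the vagueness you flag.
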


\begin{proof}
Multiplying the equation by the complex conjugate of $w$ and integrating on an edge $e_j$, it follows from the argument in the proof of Lemma \ref{lemA.1} that
\begin{align*}
	i \tbra{\partial_t u_{\varepsilon}}{w}_{j}  = -\tbra{u_{\varepsilon}}{w''}_{j} +u_{\varepsilon j}(t,0+) \overline{\partial_x w (t,0+)} - \tbra{(I+\varepsilon^2 H)^{-1} F(u)}{w}_{j},
\end{align*}
where $u_{\varepsilon}$ is as in the proof and note that $w_j (t) \in \{f \in H^2((0,\infty)) \cap C([0,\infty)) : f(0)=0\}$. Since $w$ is a solution of 
\begin{align*}
	i\partial_t w + w''=0 \text{ and } w(t,0)=0,
\end{align*}
we obtain
\begin{align*}
	i \partial_t \tbra{ u_{\varepsilon}}{w}_{j} = u_{\varepsilon j}(t,0+) \overline{\partial_x w (t,0+)} - \tbra{(I+\varepsilon^2 H)^{-1} F(u)}{w}_{j}.
\end{align*}
Integrating this on $[0,\tau)$ for $\tau \in I$ and taking $\varepsilon\to 0$, this completes the proof. 
\end{proof}

\begin{acknowledgement}
The authors would like to express deep appreciation to Dr. Tomoyuki Tanaka for introducing the papers related to NLS on the star graph.
The second author is supported by JSPS KAKENHI Grant-in-Aid for Early-Career Scientists JP18K13444 and the third author is supported by JSPS KAKENHI  Grant-in-Aid for Young Scientists (B) JP17K14218 and, partially, for Scientific Research (B) JP17H02854. 
\end{acknowledgement}



\begin{thebibliography}{99}

\bibitem{ACFN11}
Riccardo Adami, Claudio Cacciapuoti, Domenico Finco, Diego Noja, 
\textit{Fast solitons on star graphs},
Rev. Math. Phys. {\bf 23} (2011), no. 4, 409--451.


\bibitem{ACFN14}
Riccardo Adami, Claudio Cacciapuoti, Domenico Finco, Diego Noja, 
\textit{Variational properties and orbital stability of standing waves for NLS equation on a star graph}, 
J. Differential Equations {\bf 257} (2014), no. 10, 3738--3777.

\bibitem{AnGo18}
Jaime Angulo Pava, Nataliia  Goloshchapova, 
\textit{Extension theory approach in the stability of the standing waves for the NLS equation with point interactions on a star graph}, Adv. Differential Equations {\bf 23} (2018), no. 11-12, 793--846.


\bibitem{Bar84}
Jacqueline E. Barab, 
\textit{Nonexistence of asymptotically free solutions for a nonlinear Schr\"{o}dinger equation}, 
J. Math. Phys. {\bf 25} (1984), no. 11, 3270--3273.


\bibitem{Caz03} Thierry Cazenave, 
\textit{Semilinear Schr\"{o}dinger Equations}, 
Courant Lecture Notes in Mathematics, vol. 10, American Mathematical Society, Courant Institute of Mathematical Sciences, 2003.

\bibitem{EsKaHa19}
Liliana Esquivel, Nakao Hayashi, Elena I. Kaikina,  
\textit{Inhomogeneous Dirichlet-boundary value problem for one dimensional nonlinear Schr\"{o}dinger equations via factorization techniques}, 
J. Differential Equations {\bf 266} (2019), no. 2-3, 1121--1152.

\bibitem{GiOz93}
Jean Ginibre, Tohru Ozawa, 
\textit{Long range scattering for nonlinear Schr\"{o}dinger and Hartree equations in space dimension $n\geq 2$}, Comm. Math. Phys. {\bf 151} (1993), no. 3, 619--645.

\bibitem{GoOh19}
Nataliia Goloshchapova, Masahito Ohta,
\textit{Blow-up and strong instability of standing waves for the NLS-$\delta$ equation on a star graph},
preprint, arXiv:1908.07122.

\bibitem{GrIg19} 
Andreea Grecu, Liviu I. Ignat, 
\textit{The Schr\"{o}dinger equation on a star-shaped graph under general coupling conditions},
J. Phys. A {\bf 52} (2019), no. 3, 035202, 26 pp.

\bibitem{HaNa98}
Nakao Hayashi, Pavel I. Naumkin,  
\textit{Asymptotics for large time of solutions to the nonlinear Schr\"{o}dinger and Hartree equations}, 
Amer. J. Math. {\bf 120} (1998), no. 2, 369--389.

\bibitem{Kai19}
Adilbek Kairzhan,  
\textit{Orbital instability of standing waves for NLS equation on star graphs}, 
Proc. Amer. Math. Soc. {\bf 147} (2019), no. 7, 2911--2924.

\bibitem{KoSc06}
Vadim Kostrykin, Robert  Schrader, 
\textit{Laplacians on metric graphs: eigenvalues, resolvents and semigroups},
Quantum graphs and their applications, 201--225, Contemp. Math., {\bf 415}, Amer. Math. Soc., Providence, RI, 2006.


\bibitem{MaMuSe17}
Satoshi Masaki, Jason Murphy, Jun-ichi Segata,
Modified scattering for the 1d cubic NLS with a repulsive delta potential,
preprint, arXiv:1708.00392.

\bibitem{MuNa19}
Jason Murphy, Kenji Nakanishi,
\textit{Failure of scattering to solitary waves for long-range nonlinear Schr\"{o}dinger equations}. 
preprint, arXiv:1906.01802. 


\bibitem{Oza91}
Tohru Ozawa, 
\textit{Long range scattering for nonlinear Schr\"{o}dinger equations in one space dimension}, 
Comm. Math. Phys. {\bf 139} (1991), no. 3, 479--493. 

\bibitem{ReSiIII}
Michael Reed, Barry Simon,
``Methods of modern mathematical physics. III. Scattering theory", 
Academic Press [Harcourt Brace Jovanovich, Publishers], New York-London, 1979. xv+463 pp.

\bibitem{Seg15}
Jun-Ichi Segata, 
\textit{Final state problem for the cubic nonlinear Schr\"{o}dinger equation with repulsive delta potential},
Comm. Partial Differential Equations {\bf 40} (2015), no. 2, 309--328.

\bibitem{Str74}
Walter. A. Strauss, 
\textit{Nonlinear scattering theory}, in “Scattering theory in mathematical physics”,
53--78, Reidel, Dordrecht, 1974.

\bibitem{TsYa84}
Yoshio Tsutsumi, Kenji Yajima, 
\textit{The asymptotic behavior of nonlinear Schr\"{o}dinger equations}, 
Bull. Amer. Math. Soc. (N.S.) {\bf 11} (1984), no. 1, 186--188.

\bibitem{Yos18}
Kouki Yoshinaga, Master Thesis, Graduate School of Information Science and Technology, Osaka University, (2018), written in Japanese. 


\end{thebibliography}
\end{document}